\newtheorem{Theorem}{Theorem}[section]
\newtheorem{Lemma}[Theorem]{Lemma}
\newtheorem{Remark}[Theorem]{Remark}
\newtheorem{Definition}[Theorem]{Definition}
\numberwithin{equation}{section}
\def\be{\begin{equation}}
	\def\ee{\end{equation}}
\def\ben{\begin{eqnarray}}
	\def\een{\end{eqnarray}}
\newcommand{\ncom}{\newcommand}
\ncom{\n}{\normalfont}
\ncom{\vp}{\varphi}
\ncom{\pp}{\Phi}
\ncom{\N}{\mathbb{N}}
\ncom{\Lc}{\mathcal}
\ncom{\wt}{\widetilde}
\ncom{\Wf}{\boldsymbol{w}}
\ncom{\Af}{\boldsymbol{A}}
\ncom{\Bf}{\boldsymbol{B}}
\ncom{\Hf}{\boldsymbol{H}}
\ncom{\Pf}{\boldsymbol{P}}
\ncom{\no}{\nonumber}
\ncom{\f}{\boldsymbol{f}}
\ncom{\ub}{u}
\ncom{\y}{y}
\ncom{\vb}{\boldsymbol{v}}
\ncom{\T}{\mathbb{T}}
\ncom{\C}{\mathbb{C}} 
\ncom{\Hb}{\mathbb{H}}
\ncom{\Lb}{\mathbb{L}}
\ncom{\V}{\mathbb{V}}
\ncom{\U}{\mathbb{U}}
\ncom{\Ac}{\mathcal{A}}
\ncom{\Bc}{\mathcal{B}}
\ncom{\af}{\boldsymbol{a}}
\ncom{\pf}{\boldsymbol{p}}
\ncom{\zb}{z}
\ncom{\x}{w}
\ncom{\Y}{\boldsymbol{Y}}
\ncom{\xx}{\boldsymbol{\xi}}
\ncom{\zz}{\boldsymbol{\zeta}}
\ncom{\fb}{\mathbf{f}}
\ncom{\tx}{\tilde{w}}
\ncom{\sz}{\tilde{z}}
\ncom{\tu}{\tilde{u}}
\ncom{\D}{\mathcal{D}}
\newcommand{\vertiii}[1]{{\left\vert\kern-0.25ex\left\vert\kern-0.25ex\left\vert #1 
		\right\vert\kern-0.25ex\right\vert\kern-0.25ex\right\vert}}
	\def\l@subsection{\@tocline{2}{0pt}{4pc}{6pc}{}}
	\def\l@subsubsection{\@tocline{3}{0pt}{8pc}{8pc}{}}
	\def\l@section{\@tocline{1}{12pt}{0pt}{}{\bfseries}}
\long\def\/*#1*/{}
\title[Boundary Stabilizability of Generalized Burgers-Huxley Equation]{Boundary Stabilizability of Generalized Burgers-Huxley Equation with Memory}
\date{\today}
\author[M. Bag, W. Akram, and M. T. Mohan]{{Manika Bag$^\ddag$, Wasim Akram$^\dag$, }
	\and{Manil T. Mohan$^\dag$}}
    \thanks{$\dag$ Department of Mathematics, Indian Institute of Technology Roorkee, Uttarakhand, 247667, India, Email- {\normalfont{ wakram2k11@gmail.com; maniltmohan@ma.iitr.ac.in}}\\
$\ddag$ Department of Mathematics, Indian Institute of Science, Education and Research, Thiruvananthapuram,  Kerala, 695551, India, Email- {\normalfont {manikabag19@iisertvm.ac.in}, \normalfont {manikbag058@gmail.com}}\\
Wasim Akram is supported by NBHM (National Board of Higher Mathematics, Department of Atomic Energy) postdoctoral fellowship, No. 0204/16(1)(2)/2024/R\&D-II/10823. This work was initiated during the first author's visit to IIT Roorkee. The first author gratefully acknowledges the project CRG/2021/008278 for providing travel support and the financial support provided by IISER Thiruvananthapuram.}
\begin{document}
	
	\pagenumbering{arabic}

	\begin{abstract}
In this paper, we study a generalized Burgers-Huxley equation with memory, subject to nonhomogeneous Dirichlet boundary conditions. We construct a linear, finite-dimensional Dirichlet boundary feedback controller aimed at stabilizing the stationary solution corresponding to the homogeneous boundary condition. This controller is designed using eigenfunctions of the Laplace operator. We begin by analyzing the stabilization of a linear system under the proposed feedback law. Subsequently, we demonstrate that the same controller also stabilizes the full nonlinear system by applying the Banach fixed point theorem. Finally, we provide a remark on the stabilization of the generalized Burgers-Huxley equation with memory around the zero solution under nonhomogeneous Dirichlet boundary conditions.

\end{abstract}
\maketitle
\noindent \textbf{Keywords.} Generalized Burgers-Huxley equation with memory, stabilization, boundary feedback controller, principal system.

\noindent \textbf{MSC Classification (2020).} 93D23 $\cdot$ 93D15 $\cdot$ 35Q35 $\cdot$  35R09


\section{Introduction} 
\subsection{Model problem} Let $\Omega$ be a bounded domain in $\mathbb{R}^d$, $d\in \{2,3\},$ with smooth boundary $\Gamma$. Set $Q=\Omega \times(0,+\infty),$ $\Sigma_1=\Gamma_1\times(0,+\infty)$ and $\Sigma_2=\Gamma_2\times(0,+\infty),$ where $\Gamma=\Gamma_1\cup\Gamma_2.$ This paper aims to investigate the Dirichlet boundary control problem for the following integro-partial differential evolution equation:
\begin{equation}\label{eqn:modelled}
\left\{
\begin{aligned}
& y_t - \eta \Delta y +  a y^\kappa \sum_{i=1}^d \frac{\partial y}{\partial x_i} -  \int_0^t e^{-\delta (t-s)}\Delta y(s)ds  = \beta y(1-y^\kappa)(y^\kappa-\gamma)+ f_s(x), \  \text{ in } \ Q ,\\
&   y(x,t)=u(x,t), \ \text{ for all }  \ (x,t)\in \Sigma_1 , \quad  \frac{\partial y}{\partial n}= 0, \   \text{ on }\  \Sigma_2,\\&y(x,0)=y_0(x), \ \text{ for all } \ x\in \Omega.
\end{aligned}
\right.
\end{equation}
Here, $y$ denotes the state variable and $u$ represents the control variable, which is applied on a portion of the boundary, $\Gamma_1$. The system described above is referred to as the generalized Burgers-Huxley (GBH) equation with memory (see \cite{ABM25GBHE}). The memory effects are incorporated through a linear time convolution involving $\Delta y$ and the memory kernel $e^{-\delta\cdot}$. The Dirichlet controller $u$ is implemented on the boundary segment $\Gamma_1$, while $\Gamma_2$ is assumed to be insulated. The parameters $\eta$, $\delta$, $a$, and $\beta$ are fixed positive constants. The constant $\kappa$ can assume values from a specified set, as described below.
\begin{align} \label{eqdef:delta}
	\kappa=\begin{cases}
		 \text{ any natural number, } & \text{ if } d=2, \\
		 1,2, & \text{ if } d=3.
	\end{cases}
\end{align} 
Note that 
\begin{align*}
    \lim_{t\to \infty} \int_0^t e^{-\delta (t-s)}\Delta \y(s) ds = \lim_{t\to \infty}  \frac{\int_0^t e^{\delta s}\Delta \y(s) ds}{e^{\delta t}} =  \lim_{t\to \infty} \frac{e^{\delta t} \Delta \y(t)}{\delta e^{\delta t}} = \frac{1}{\delta} \Delta \y_\infty.
\end{align*}
Now, we consider the corresponding steady-state equation as:
\begin{equation} \label{eq:StdNSE}
\left\{
    \begin{aligned}
&  - \left(\eta \Delta + \frac{1}{\delta}\right) y_\infty +  a y_\infty^\kappa \sum_{i=1}^d \frac{\partial y_\infty}{\partial x_i}   + \beta y_\infty(y_\infty^\kappa-1)(y_\infty^\kappa-\gamma)  =f_s \ \text{ in } \ \Omega, \\
& y_\infty=0 \  \text{ on }\  \Gamma_1, \quad \frac{\partial y_\infty}{\partial n} = 0,\  \text{ on } \ \Gamma_2.
    \end{aligned}\right.
\end{equation}
This paper focuses on investigating the stabilizability of the steady-state solution $y_\infty$ for the nonlinear system \eqref{eqn:modelled}. The system's instability can arise from the nonlinear terms and the presence of the source term $f_s$.

\begin{Definition}
The system \eqref{eqn:modelled} is said to be \emph{exponentially stabilizable} around the steady-state solution $y_\infty$ in a Banach space $X$ (the state space) with decay rate $\gamma > 0$ if there exists a control $u \in U$ (the control space) such that
\begin{align*}
\|y(t) - y_\infty\|_X \leq C e^{-\gamma t} \|y_0 - y_\infty\|_X,
\end{align*}
for all $t > 0$, where $C > 0$ is a constant independent of both $t$ and the initial condition $y_0$.

\end{Definition}
\subsection{State of the art} If we set $\kappa = 1$ and omit the memory term, equation \eqref{eqn:modelled} reduces to the classical Burgers-Huxley equation, which serves as a prototype model for capturing the interaction between reaction dynamics, convection, and diffusion processes (see \cite{AFK86}). When $\kappa \geq 1$, equation \eqref{eqn:modelled} becomes a generalized form of the Huxley equation, incorporating memory effects, and is relevant in modeling phenomena such as nerve signal propagation in neurophysics and wave propagation in liquid crystals. For a detailed discussion of this system, we refer the reader to \cite{W88}. Solitary and traveling wave solutions of the GBH equation were derived in \cite{WZL90, A05} by employing suitable nonlinear transformations. The global solvability of the system \eqref{eqn:modelled} is investigated in \cite{SAM24}.

Extensive studies have been carried out on the feedback stabilizability of Burgers' and Navier-Stokes equations; see \cite{WKR, MTM-JTP, MTMAnkit, BreKunPf19, SChowErve, JPR6-2017, JPR15SICON, BBSW15, BarbSICON12, JPRSheThev, BarbSICON11, BadESAIM9, AKR24}, among others. For the present discussion, we focus on several notable contributions. In \cite{T80}, R. Triggiani established the existence of stabilizing linear boundary feedback controllers for linear parabolic equations under Dirichlet or Neumann boundary conditions. This foundational approach was later extended to the Navier-Stokes equations in \cite{BLT06, BLTi06}, resulting in robust linear feedback laws. However, these methods typically require solving high- or infinite-dimensional Riccati equations, which pose considerable computational challenges. To address this, \cite{BV13} proposed a novel method for constructing boundary feedback controllers for parabolic systems, under the condition that the normal derivatives of eigenfunctions corresponding to unstable modes are linearly independent. This technique was subsequently applied to the heat equation with fading memory in \cite{MH15}, and extended to more complex systems such as the Navier-Stokes equations with fading memory and the phase-field models in \cite{MNS15, M14}, respectively. The framework was further generalized in \cite{M17} to accommodate semilinear parabolic equations, successfully eliminating the restrictive linear independence assumption required in \cite{BV13}.

\subsection{Contribution and methodology} The main objective of this work is to design a boundary feedback control $u$ such that the controlled solution of equation \eqref{eqn:modelled} satisfies
\begin{align*}
\lim_{t\to\infty} y(t) = y_\infty \quad \text{in } L^2(\Omega),
\end{align*}
with exponential rate of convergence, provided that the initial condition $y_0$ lies within a suitable neighborhood of the steady state $y_\infty$. Specifically, the control is applied on a subset $\Gamma_1$ of the boundary $\Gamma$ to drive the system toward the desired long-term behavior. The goal is to ensure that the solution $y(t)$ converges exponentially to the equilibrium $y_\infty$ as time progresses. This result generalizes the work in \cite{MH15}, which considered a nonlinearity of the form $f(y)$, satisfying
\begin{align*}
f \in C^2(\mathbb{R}), \quad \text{with } |f''(y)| \leq C_1 |y|^\alpha + C_2, \quad y \in \mathbb{R},
\end{align*}
for some constants $C_1, C_2 > 0$ and $\alpha \in \mathbb{N}$, under the assumptions $d = 1, 2$ or $d = 3$ with $0 < \alpha \leq 1$. In contrast, our setting involves a more general nonlinearity $f(y, \nabla y)$ that includes both a Burgers-type convective term and a polynomial-type nonlinear term. An important consequence of the stabilization result is the existence of local  solutions  to the nonlinear system \eqref{eqn:modelled}, a topic that has not been previously addressed in the literature.

To investigate the stabilization of the solution of \eqref{eqn:modelled} around the steady state $y_\infty$, which satisfies \eqref{eq:StdNSE}, we introduce the fluctuation variable $w = y - y_\infty$. This transformation reformulates the original stabilization problem into an equivalent boundary stabilization problem around zero for the following system:
\begin{equation}  \label{eqn:nlmodel}
\left\{
\begin{aligned}
 &  w_t -\eta \Delta w  + a \left( (w+y_\infty)^\kappa \nabla (w+y_\infty) \cdot \boldsymbol{1} - y_\infty^\kappa \nabla y_\infty\cdot \boldsymbol{1} \right)  -  \int_0^t e^{-\delta(t-s)}\Delta w(s) ds \\
		& \quad  + \beta \Big( (w+y_\infty) ((w+y_\infty)^\kappa -1) ((w+y_\infty)^\kappa -\gamma) - y_\infty (y_\infty^\kappa -1) (y_\infty^\kappa -\gamma)  \Big)\\&\quad + \frac{1}{\delta}e^{-\delta t}\Delta y_\infty=0, \ \text{ in }\ \Omega,\\
&   w(x,t)=u(x,t) \ \text{ for all } \ (x,t)\in \Sigma_1 , \quad  \frac{\partial w}{\partial n}= 0, \   \text{ on }\  \Sigma_2,\\&w(x,0)=y_0(x)-y_\infty(x)\  \text{ for all } \  x\in \Omega,
\end{aligned}
\right.
\end{equation}
here $\boldsymbol{1}=(1,1,\ldots,1)^{\top} \in \mathbb{R}^d.$ Note that in the above derivation, we have used
\begin{align*}
    \frac{1}{\delta} \Delta y_\infty =  
    \frac{1}{\delta} (1-e^{-\delta t}) \Delta y_\infty + \frac{1}{\delta} e^{-\delta t} \Delta y_\infty   =   \int_0^t  e^{-\delta (t-s)} \Delta y_\infty ds+ \frac{1}{\delta} e^{-\delta t} \Delta y_\infty.
\end{align*}
In the seminal works \cite{IM15, BV13, M14}, the stabilization of systems was examined either around zero or a non-zero steady state, with the nonlinearity exhibiting polynomial growth. However, in our current setting, the system features both a polynomial-type nonlinearity and an additional coupling nonlinearity involving terms like $y^\kappa$ and $\nabla y$, which introduces further analytical challenges, particularly in studying the stabilization of the linearized problem around a non-constant steady state. To address these challenges, we analyze a simplified linear system obtained by omitting all nonlinear terms from \eqref{eqn:modelled}, which we refer to as the \emph{principal system}. We establish the well-posedness of this principal system using a parabolic lifting approach (see \eqref{eqn:plift}). Although well-posedness can also be derived via an elliptic lifting method (cf. \eqref{eqn:lift}), the latter requires stronger regularity conditions on the boundary data. To generalize our results, we first develop the well-posedness theory using parabolic lifting and then return to the elliptic lifting framework when discussing stability, as the results remain valid in that context. Following this, we apply the boundary feedback control strategy proposed in \cite{BV13, IM15} to study the exponential stability of the principal system. Finally, leveraging the regularity of the solution to the principal system with nonhomogeneous data and applying the Banach fixed-point theorem, we establish the null stabilization of the full nonlinear system \eqref{eqn:nlmodel}.

\subsection{Organization of paper} The remainder of the article is structured as follows: Section \ref{sec:N-P} presents the necessary preliminary notations along with some well-known inequalities that will be used throughout the paper. In Section \ref{sec: Well-R}, we study the well-posedness and regularity of the principal system with boundary and nonhomogeneous data. Section \ref{sec:SLS} is dedicated to establishing the stabilization results for the principal (linearized) system. Finally, in Section \ref{sec:SNlS}, we address the stabilization of the full nonlinear system using the fixed point theorem.

\section{Notations and Preliminaries} \label{sec:N-P}
We denote $L^2(\Omega) := L^2$, the space of equivalence classes of Lebesgue measurable, square-integrable functions on $\Omega$, equipped with the standard norm $\|\cdot\|$ and inner product $(\cdot, \cdot)$.
Likewise, $L^2(\Gamma_1)$ denotes the space of equivalence classes of Lebesgue measurable, square-integrable functions defined on $\Gamma_1$, equipped with the inner product
$$
(y(x), z(x))_{\Gamma_1} = \int_{\Gamma_1} y(x) z(x) \, d\sigma,
$$
where $\sigma$ denotes the Lebesgue surface measure on the boundary $\Gamma_1$.
 By $H^m(\Omega)=:H^m,$ $m\in\mathbb{N}$, we 
denote the standard Sobolev spaces on $\Omega$ endowed with standard norms, denoted by $\|\cdot\|_{H^m}.$ 
 For any Banach space $X$ with the norm $\|\cdot\|_X$, the Bochner space $L^p(0,\infty; X)$ is defined as
$$L^p(0,\infty; X):=\left\{ g:[0, \infty)\to X\Big| \|g\|_{L^p(0,\infty; X)}:=\Big(\int_0^\infty\|g(t)\|^p_X dt\Big)^\frac{1}{p}<\infty\right\} \text{ for } 1\leq p<\infty.$$
We denote the trace space $$H^{s,r}((0,\infty)\times\Gamma)=L^2(0,\infty; H^s(\Gamma))\cap H^r(0,\infty; L^2(\Gamma)), \text{ $s, r$ are positive real numbers}.$$
Next, we recall the result (cf. \cite[Theorem 10.1.5]{BS87}, \cite{CNT24}) that there exists an orthonormal basis $\lbrace\vp_m\rbrace_{m\in  \mathbb{N}}$ of $L^2$ and sequence of positive real numbers $\lbrace \lambda_m\rbrace_{m \in \mathbb{N}}$ with $\lambda_m\rightarrow \infty$ as $m\rightarrow \infty,$ such that 
\begin{equation}\label{eqevlaplace}
\begin{aligned}
& 0<\lambda_1\leq \lambda_2\leq \ldots \leq \lambda_m\leq\ldots, \\
&A\vp_m:=-\Delta \vp_m =\lambda_m \vp_m,  \text{ in }\Omega, \\
&\vp_m \;\in \mathbf{D}(A)=\left\{y\in H^2(\Omega): y=0, \text{ on }\Gamma_1, \, \frac{\partial y}{\partial n}= 0, \text{ on }\Gamma_2\right\},
\end{aligned}
\end{equation}
where $n$ is the unit drawn normal to the boundary $\Gamma_2$. Each eigenvalue $\lambda_m$ is counted according to its multiplicity. To proceed with our analysis in greater depth, we now introduce a set of assumptions that will underpin the subsequent results.

\begin{Remark}\label{rem:eig-N}
We observe that for any given $\omega > 0$, there exist a natural number $N_\omega \in \mathbb{N}$ and a constant $\epsilon_\omega > 0$ such that
$$
-\eta\lambda_j + \omega + \epsilon_\omega < 0, \  \text{ for all }\  j = N_\omega + 1, N_\omega + 2, \ldots,
$$
where $\eta>0$ is the diffusion coefficient appearing in \eqref{eqn:modelled}. 
\end{Remark}

We now impose the following assumption on the normal derivatives of the eigenfunctions of the operator $A$.
\begin{itemize}
\item[($\mathbf{A1}$)] For a given $\omega>0$, the system $\left\lbrace\frac{\partial\vp_i}{\partial n}, \, i=1, 2, \ldots, N_\omega\right\rbrace$ is linearly independent on $L^2(\Gamma_1).$
\end{itemize}
From now onward, we take $\omega>0$ and it will be fixed in Section \ref{sec:SLS}.
 Inspired by the idea of \cite{BV13}, we introduce the following feedback control 
\begin{align}\label{fcontrol}
u=\lambda_1\sum_{j=1}^{N_\omega}\mu_j( w, \varphi_j)\Phi_j,\  \text{ on }\ \Sigma_1,
\end{align}
where
\begin{align} \label{eqdef:Phi_j}
\Phi_i=\sum_{j=1}^{N_\omega} a_{ij}\frac{\partial\varphi_i}{\partial n}, \  i=1, 2,\ldots ,N_\omega,
\end{align}
with $\{a_{ij}\}_{i,j=1}^{N_\omega}$ chosen such that $$\sum_{k=1}^{N_\omega} a_{ij} \left(\frac{\partial\varphi_j}{\partial n}, \frac{\partial\varphi_k}{\partial n}\right)_{\Gamma_1}=\delta_{ij} = \begin{cases} 1 & \text{ if } i=j, \\ 0 & \text{ otherwise} \end{cases}, \, \, i, j = 1, 2, \ldots, N_\omega.$$ Moreover,  \begin{align}
\mu_j=\frac{k+\eta\lambda_j}{k+\eta(\lambda_j-\lambda_1)}. 
\end{align}
where the constant $k$ will be made precise later. We note that by assumption ($\mathbf{A1}$), the Gram matrix $\Big(\left(\frac{\partial\varphi_j}{\partial n}, \frac{\partial\varphi_k}{\partial n}\right)_{\Gamma_1}\Big)_{j,k=1}^{N_\omega}$ is invertible. Therefore, the coefficients $a_{ij}$ and $\pp_i$ are well-defined. As a result, the feedback controller $u$ given in \eqref{fcontrol} is also well-defined. We now recall some standard inequalities that will be used in subsequent sections.

\begin{Lemma}[Young's inequality.] For any non-negative real number $a,b$ and for any $\varepsilon>0,$ the following inequalities hold:
\begin{align} \label{eqPR-YoungIneq}
	ab \le \frac{\varepsilon a^2}{2} + \frac{b^2}{2\varepsilon} \text{ and } ab \le \frac{a^p}{p}+\frac{b^q}{q},
\end{align}
for any $p,q>1$ such that $\frac{1}{p}+\frac{1}{q}=1.$
\end{Lemma}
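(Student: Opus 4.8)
The plan is to establish the general (second) inequality first and then obtain the quadratic (first) inequality as a special case. For the general form $ab \le \frac{a^p}{p} + \frac{b^q}{q}$, I would rely on the convexity of the exponential function. First I would dispose of the degenerate cases: if either $a = 0$ or $b = 0$, the left-hand side vanishes while the right-hand side is nonnegative, so the inequality holds trivially.

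For the main case $a, b > 0$, I would write $ab = \exp(\ln a + \ln b)$ and then exploit the constraint $\frac{1}{p} + \frac{1}{q} = 1$ to express the exponent as a convex combination,
\begin{align*}
ab = \exp\!\left( \frac{1}{p} \ln a^p + \frac{1}{q} \ln b^q \right).
\end{align*}
Since $\exp$ is convex and $\tfrac{1}{p}, \tfrac{1}{q}$ are nonnegative weights summing to one, the defining inequality of convexity (Jensen's inequality) yields
\begin{align*}
\exp\!\left( \frac{1}{p} \ln a^p + \frac{1}{q} \ln b^q \right) \le \frac{1}{p} \exp(\ln a^p) + \frac{1}{q} \exp(\ln b^q) = \frac{a^p}{p} + \frac{b^q}{q},
\end{align*}
which is the desired bound. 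An equivalent route would be to fix $b$ and minimize $g(a) = \frac{a^p}{p} + \frac{b^q}{q} - ab$ over $a \ge 0$ by elementary calculus, checking that the unique critical point is a global minimum and that the minimal value is nonnegative.

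For the quadratic inequality $ab \le \frac{\varepsilon a^2}{2} + \frac{b^2}{2\varepsilon}$, I would take $p = q = 2$ in the general form and apply it to the rescaled quantities $\sqrt{\varepsilon}\,a$ and $b/\sqrt{\varepsilon}$; the product is unchanged, $(\sqrt{\varepsilon}\,a)(b/\sqrt{\varepsilon}) = ab$, while the right-hand side becomes exactly $\frac{\varepsilon a^2}{2} + \frac{b^2}{2\varepsilon}$. Alternatively, this follows immediately from expanding the nonnegative square $\big(\sqrt{\varepsilon}\,a - b/\sqrt{\varepsilon}\big)^2 \ge 0$ and rearranging terms.

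There is no genuine obstacle in this argument; it is a classical elementary estimate. The only points requiring a modicum of care are handling the degenerate endpoints $a = 0$ or $b = 0$ before invoking the logarithm, and verifying that $\tfrac{1}{p}$ and $\tfrac{1}{q}$ indeed constitute a valid convex combination, which is precisely the hypothesis $\frac{1}{p} + \frac{1}{q} = 1$ together with $p, q > 1$.
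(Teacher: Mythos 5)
Your argument is correct: the convexity-of-the-exponential derivation of $ab \le \frac{a^p}{p}+\frac{b^q}{q}$ (with the degenerate cases $a=0$ or $b=0$ handled separately) and the subsequent rescaling $a\mapsto\sqrt{\varepsilon}\,a$, $b\mapsto b/\sqrt{\varepsilon}$ with $p=q=2$ are the standard and complete route to both inequalities. The paper states this lemma as a recalled classical fact and supplies no proof of its own, so there is nothing to compare against; your proposal fills that gap correctly.
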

\begin{Lemma}[Generalized H\"older's inequality] \label{ppsPR-GenHoldIneq}
	Let $f\in L^p(\Omega),$ $g\in L^q(\Omega),$ and $h\in L^r(\Omega),$ where $1\le p, q ,r\leq \infty$ are such that $\frac{1}{p}+\frac{1}{q}+\frac{1}{r}=1.$ Then $fgh\in L^1(\Omega)$ and 
	\begin{align*}
		\|fgh\|_{L^1(\Omega)} \le \|f\|_{L^p(\Omega)} \|g\|_{L^q(\Omega)} \|h\|_{L^r(\Omega)}.
	\end{align*} 
\end{Lemma}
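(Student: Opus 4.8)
The plan is to reduce the three-function estimate to two successive applications of the classical two-function Hölder inequality, which I take as known. Assume first that all three exponents are finite; note that the constraint $\frac{1}{p}+\frac{1}{q}+\frac{1}{r}=1$ together with $p,q,r\ge 1$ then forces $p,q,r\in(1,\infty)$, so no exponent is actually $1$ in this case. The idea is to split $fgh = f\cdot(gh)$ and apply Hölder to the pair $f$, $gh$ with conjugate exponents $p$ and $p'$, where $p'$ is determined by $\frac{1}{p}+\frac{1}{p'}=1$; the hypothesis gives exactly $\frac{1}{p'}=\frac{1}{q}+\frac{1}{r}$. This yields
\[
\|fgh\|_{L^1(\Omega)} \le \|f\|_{L^p(\Omega)}\,\|gh\|_{L^{p'}(\Omega)}.
\]

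It then remains to bound $\|gh\|_{L^{p'}(\Omega)}$ by $\|g\|_{L^q(\Omega)}\|h\|_{L^r(\Omega)}$. Writing $\|gh\|_{L^{p'}(\Omega)}^{p'}=\int_\Omega |g|^{p'}|h|^{p'}\,dx$ and applying Hölder once more with the rescaled exponents $\tilde q=q/p'$ and $\tilde r=r/p'$, I first check these are admissible: since $\frac{1}{p'}=\frac{1}{q}+\frac{1}{r}\ge\frac{1}{q}$ we get $p'\le q$, hence $\tilde q\ge 1$ (and likewise $\tilde r\ge 1$), while $\frac{1}{\tilde q}+\frac{1}{\tilde r}=p'\bigl(\frac{1}{q}+\frac{1}{r}\bigr)=1$, so they are genuine conjugate exponents. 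This gives
\[
\int_\Omega |g|^{p'}|h|^{p'}\,dx \le \Bigl(\int_\Omega |g|^{q}\,dx\Bigr)^{p'/q}\Bigl(\int_\Omega |h|^{r}\,dx\Bigr)^{p'/r},
\]
and raising to the power $1/p'$ yields $\|gh\|_{L^{p'}(\Omega)}\le\|g\|_{L^q(\Omega)}\|h\|_{L^r(\Omega)}$. Substituting back into the first display completes the estimate, and the membership $fgh\in L^1(\Omega)$ is then automatic whenever the right-hand side is finite.

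Finally I would dispose of the endpoint cases in which some exponent equals $\infty$. If, say, $r=\infty$, then $\frac{1}{p}+\frac{1}{q}=1$, and the estimate follows by extracting $\|h\|_{L^\infty(\Omega)}$ from the integral via the pointwise bound $|h|\le\|h\|_{L^\infty(\Omega)}$ almost everywhere and applying the two-function Hölder inequality to $f$ and $g$; the subcases $p=\infty$ or $q=\infty$ are symmetric.

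I do not expect a genuine obstacle here, as the statement is elementary. The only points that require care are the arithmetic of the conjugate exponents, namely verifying that the rescaled exponents $\tilde q,\tilde r$ are admissible (both $\ge 1$ with $\frac{1}{\tilde q}+\frac{1}{\tilde r}=1$), and the separate bookkeeping for the degenerate cases where one exponent is infinite.
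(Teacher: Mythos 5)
Your argument is correct and complete: the reduction of the three-function inequality to two successive applications of the classical two-exponent H\"older inequality is the canonical proof, and you have checked the only delicate points, namely that the intermediate exponent $p'$ defined by $\frac{1}{p'}=\frac{1}{q}+\frac{1}{r}$ is conjugate to $p$, that the rescaled exponents $\tilde q=q/p'$ and $\tilde r=r/p'$ are both at least $1$ and conjugate to each other, and that the endpoint cases with an infinite exponent are handled separately by the pointwise bound $|h|\le\|h\|_{L^\infty(\Omega)}$. The paper states this lemma as a standard recalled fact and supplies no proof of its own, so there is nothing to compare against; your write-up would serve as a correct self-contained justification.
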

\begin{Lemma}[Agmons' inequality {\n\cite{Agmon10}}] \label{lemVB:AgmonIE}
	Let $\Omega$ be a bounded domain in $\mathbb{R}^d,$  and let $f \in H^{s_2}(\Omega).$ Let $s_1,s_2$ be such that $s_1<\frac{d}{2}<s_2.$ If for $0<\theta<1,$ $\frac{d}{2}=\theta s_1+(1-\theta)s_2,$ then there exists a positive constant $C_a=C_a(\Omega)$ such that
	\begin{equation} \label{eqVB-AgmonIE}
		\|f\|_{L^\infty(\Omega)} \le C_a \|f\|_{H^{s_1}(\Omega)}^{\theta} \|f\|_{H^{s_2}(\Omega)}^{1-\theta}.
	\end{equation}
\end{Lemma}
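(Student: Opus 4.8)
The plan is to first establish the inequality on the whole space $\mathbb{R}^d$ by a Fourier-analytic splitting argument, and then transfer it to the bounded domain $\Omega$ through a universal extension operator. Since $\Omega$ has smooth boundary, there is a Stein-type total extension operator $E \colon H^s(\Omega) \to H^s(\mathbb{R}^d)$ that is bounded simultaneously for every $s \ge 0$, so that $\|Ef\|_{H^s(\mathbb{R}^d)} \le C \|f\|_{H^s(\Omega)}$ with $C$ uniform over the finite set $\{s_1, s_2\}$ of interest, and $(Ef)|_\Omega = f$. Because $\|f\|_{L^\infty(\Omega)} \le \|Ef\|_{L^\infty(\mathbb{R}^d)}$, it suffices to prove the claimed bound for $g := Ef$ on $\mathbb{R}^d$ and then absorb the extension constants into $C_a$.

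On $\mathbb{R}^d$ I would begin from the elementary estimate $\|g\|_{L^\infty(\mathbb{R}^d)} \le (2\pi)^{-d/2}\|\hat g\|_{L^1(\mathbb{R}^d)}$, valid once the right-hand side is finite, where $\hat g$ denotes the Fourier transform. Introducing a cutoff radius $R > 0$, I split the $L^1$-norm of $\hat g$ over the low-frequency ball $\{|\xi|\le R\}$ and the high-frequency region $\{|\xi|>R\}$, and apply the Cauchy--Schwarz inequality to each piece, inserting the weights $(1+|\xi|^2)^{s_1}$ on the low-frequency region and $(1+|\xi|^2)^{s_2}$ on the high-frequency region. This yields
\begin{align*}
\|\hat g\|_{L^1} \le \|g\|_{H^{s_1}(\mathbb{R}^d)} \Big( \int_{|\xi|\le R}(1+|\xi|^2)^{-s_1}\, d\xi\Big)^{1/2} + \|g\|_{H^{s_2}(\mathbb{R}^d)} \Big( \int_{|\xi|> R}(1+|\xi|^2)^{-s_2}\, d\xi\Big)^{1/2}.
\end{align*}
Since $s_1 < \tfrac{d}{2}$ the first weight integral is finite over the ball and grows like $R^{d-2s_1}$, while since $s_2 > \tfrac{d}{2}$ the second weight integral is convergent and decays like $R^{d-2s_2}$; in particular both are finite, which retroactively justifies $\hat g \in L^1$.

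The core step is then optimization in $R$. The previous estimate gives, for a constant $C$ depending only on $d, s_1, s_2$,
\begin{align*}
\|g\|_{L^\infty(\mathbb{R}^d)} \le C\Big( \|g\|_{H^{s_1}} R^{(d-2s_1)/2} + \|g\|_{H^{s_2}} R^{(d-2s_2)/2}\Big).
\end{align*}
Choosing $R = \big(\|g\|_{H^{s_2}}/\|g\|_{H^{s_1}}\big)^{1/(s_2-s_1)}$ to balance the two terms (and treating the degenerate case $g\equiv 0$ separately), both contributions collapse to $\|g\|_{H^{s_1}}^{1-\alpha}\|g\|_{H^{s_2}}^{\alpha}$ with $\alpha = (d-2s_1)/\big(2(s_2-s_1)\big)$. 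Finally, the hypothesis $\tfrac{d}{2} = \theta s_1 + (1-\theta)s_2$ rearranges to $d - 2s_1 = 2(1-\theta)(s_2-s_1)$, which is exactly the identity forcing $\alpha = 1-\theta$, and hence $1-\alpha = \theta$. Substituting back and undoing the extension yields $\|f\|_{L^\infty(\Omega)} \le C_a \|f\|_{H^{s_1}(\Omega)}^{\theta}\|f\|_{H^{s_2}(\Omega)}^{1-\theta}$.

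I expect the main obstacle to be organisational rather than deep: ensuring that a \emph{single} extension operator controls both Sobolev norms simultaneously, so that the interpolation exponents are not spoiled by $s$-dependent constants, and carefully verifying the growth/decay rates of the two weight integrals so that the optimal $R$ produces precisely the exponent $\theta$ dictated by $\tfrac{d}{2} = \theta s_1 + (1-\theta)s_2$. The degenerate cases $g \equiv 0$ and small $R$ are handled trivially.
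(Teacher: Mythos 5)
The paper does not prove this lemma; it is recalled verbatim as a classical result with a citation to Agmon, so there is no in-paper argument to compare against. Your Fourier-splitting proof (extension to $\mathbb{R}^d$, Hausdorff--Young via $\|g\|_{L^\infty}\le C\|\hat g\|_{L^1}$, weighted Cauchy--Schwarz on low and high frequencies, optimization in $R$, with the identity $d-2s_1=2(1-\theta)(s_2-s_1)$ producing exactly the exponent $\theta$) is the standard derivation and is correct, with the degenerate case and the normalization $R\ge 1$ handled as you indicate.
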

In the following lemma, we restate the Sobolev embedding theorem (see \cite[Theorem 2.4.4]{Kes}) as it applies in our context.
\begin{Lemma}[Sobolev embedding] \label{lemPR:SobEmb}
	Let $\Omega$ be an open bounded domain in $\mathbb{R}^d$ of class $C^1$  with $d\in \mathbb{N}.$ Then, we have the following continuous inclusion:
	\begin{itemize}
		\item[$(a)$] $H^1(\Omega)\hookrightarrow L^q(\Omega)$ for $d>2,$ where $1\leq q\leq \frac{2d}{d-2},$
		\item[$(b)$] $H^1(\Omega) \hookrightarrow L^q(\Omega)$ for all $d=2, 1\leq q<\infty.$ 
	\end{itemize}
\end{Lemma}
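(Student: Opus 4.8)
Since this statement merely records a classical embedding, the plan is to reproduce the Gagliardo--Nirenberg--Sobolev inequality on the whole space and then transfer it to the bounded $C^1$ domain $\Omega$ by means of a bounded extension operator. I would first prove the estimate for $f \in C_c^\infty(\mathbb{R}^d)$ and then pass to $H^1$ by density. The core of the argument is the case $p=1$: for $f \in C_c^\infty(\mathbb{R}^d)$, the fundamental theorem of calculus gives $|f(x)| \le \int_{\mathbb{R}} |\partial_i f|\, dx_i$ in each coordinate direction $i$; multiplying these $d$ bounds, integrating the variables one at a time, and applying the generalized H\"older inequality (Lemma \ref{ppsPR-GenHoldIneq}) at each stage yields
\begin{align*}
\|f\|_{L^{d/(d-1)}(\mathbb{R}^d)} \le \prod_{i=1}^d \Big(\int_{\mathbb{R}^d} |\partial_i f|\, dx\Big)^{1/d} \le \|\nabla f\|_{L^1(\mathbb{R}^d)}.
\end{align*}

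For $d>2$ I would then apply this inequality to $|f|^\gamma$ with $\gamma=\frac{2(d-1)}{d-2}$, use the Cauchy--Schwarz inequality on the right to split $\big\||f|^{\gamma-1}|\nabla f|\big\|_{L^1}\le \big\||f|^{\gamma-1}\big\|_{L^2}\|\nabla f\|_{L^2}$, and observe that both $\frac{\gamma d}{d-1}$ and $2(\gamma-1)$ equal the critical exponent $2^*=\frac{2d}{d-2}$; cancelling the common power of $\|f\|_{L^{2^*}}$ produces the Sobolev inequality $\|f\|_{L^{2^*}(\mathbb{R}^d)} \le C\|\nabla f\|_{L^2(\mathbb{R}^d)}$. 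Density of $C_c^\infty(\mathbb{R}^d)$ in $H^1(\mathbb{R}^d)$ extends this to all of $H^1(\mathbb{R}^d)$, and composing with a bounded extension operator $E\colon H^1(\Omega)\to H^1(\mathbb{R}^d)$, which exists because $\Omega$ is of class $C^1$, transfers it to $\Omega$. Since $\Omega$ is bounded, $L^{2^*}(\Omega)\hookrightarrow L^q(\Omega)$ for every $1\le q\le 2^*$, giving part $(a)$.

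For $d=2$ the critical exponent degenerates, so part $(b)$ needs a separate bootstrap. Here the $p=1$ inequality reads $\|f\|_{L^2(\mathbb{R}^2)}\le\|\nabla f\|_{L^1(\mathbb{R}^2)}$; applying it to $|f|^\gamma$ and using Cauchy--Schwarz gives
\begin{align*}
\|f\|_{L^{2\gamma}(\mathbb{R}^2)}^{\gamma}\le C\gamma\,\|f\|_{L^{2(\gamma-1)}(\mathbb{R}^2)}^{\gamma-1}\,\|\nabla f\|_{L^2(\mathbb{R}^2)}.
\end{align*}
Since each increment of $\gamma$ raises the integrability exponent by $2$, starting from the base estimate on $\|f\|_{L^2}$ and iterating finitely many times controls $\|f\|_{L^q}$ by $\|f\|_{H^1}$ for every finite $q$; composing with the extension operator and invoking boundedness of $\Omega$ then yields $(b)$. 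I expect the main obstacle to lie precisely in this two-dimensional case: because the formal Sobolev exponent is infinite one cannot argue in a single step, and the iteration must be organised so as to respect that the embedding genuinely fails at $q=\infty$, which is why the range in $(b)$ is open. The only other technical input is the construction of the extension operator, which is standard for $C^1$ boundaries and may simply be quoted.
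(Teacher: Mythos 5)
Your sketch is the standard Gagliardo--Nirenberg--Sobolev argument (the $L^1$ estimate via the fundamental theorem of calculus and iterated H\"older, the power substitution $|f|^{\gamma}$ with $\gamma=\tfrac{2(d-1)}{d-2}$ for $d>2$, the Ladyzhenskaya-type iteration for $d=2$, and a bounded extension operator for the $C^1$ domain), and it is correct as outlined. The paper gives no proof of this lemma --- it is simply recalled from the cited textbook --- so there is no internal argument to compare against; your reconstruction is a valid proof of the stated embedding and matches the classical route the reference itself follows.
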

In the next theorem, we recall the Gagliardo-Nirenberg inequality in the case of a bounded domain with a smooth boundary.
\begin{Lemma}[Gagliardo-Nirenberg inequality {\normalfont\cite{Nirenberg1959}}] \label{thm:G-NinBdd}
	Let \( \Omega \subset \mathbb{R}^d \) be a bounded domain with a smooth boundary. For any \( u \in W^{m,q}(\Omega) \), and for any integers \( j \) and \( m \) satisfying \( 0 \leq j < m \), the following inequality holds:
	\[
	\| D^j u \|_{L^p(\Omega)} \leq C \|  u \|_{W^{m,q}(\Omega)}^\theta \| u \|_{L^r(\Omega)}^{1-\theta} ,
	\]
	where \( D^j u \) denotes the \( j \)-th order weak derivative of \( u \), and:
	\[
	\frac{1}{p} = \frac{j}{d}+\theta \left( \frac{1}{q} - \frac{m}{d} \right) +  \frac{(1-\theta)}{r},
	\]
	for some constant \( C \) that depends on the domain \( \Omega \) but not on \( u \), where \( \theta \in [0,1] \).
\end{Lemma}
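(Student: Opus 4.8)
The plan is to reduce the inequality on the bounded domain $\Omega$ to the corresponding inequality on the whole space $\mathbb{R}^d$, prove the latter, and then transfer back. Since $\Omega$ is bounded with smooth boundary, I would first invoke a Stein-type total extension operator $E\colon W^{m,q}(\Omega)\to W^{m,q}(\mathbb{R}^d)$ that simultaneously maps $L^r(\Omega)$ boundedly into $L^r(\mathbb{R}^d)$ and satisfies $Eu=u$ almost everywhere on $\Omega$. Applying the whole-space inequality to $Eu$ and then restricting to $\Omega$ (using $\|D^j u\|_{L^p(\Omega)}\le \|D^j(Eu)\|_{L^p(\mathbb{R}^d)}$ together with the boundedness of $E$ on both scales) yields the claim with a constant depending only on $\Omega$, $m$, $q$, $r$, and $d$. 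This isolates the genuine analytic content in the whole-space estimate.

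On $\mathbb{R}^d$, I would first check that the exponent relation $\frac1p=\frac jd+\theta(\frac1q-\frac md)+\frac{1-\theta}{r}$ is exactly the one forced by scaling: replacing $u(x)$ by $u_\lambda(x):=u(\lambda x)$ and tracking the powers of $\lambda$ on both sides shows that any inequality of this form must respect this single linear relation, so the condition is both necessary and the hypothesis under which I establish sufficiency. The base case I would prove is $j=0$, $m=1$: combining the Gagliardo--Nirenberg--Sobolev inequality $\|u\|_{L^{dq/(d-q)}(\mathbb{R}^d)}\le C\|\nabla u\|_{L^q(\mathbb{R}^d)}$ (obtained for smooth compactly supported $u$ by writing $u(x)$ as a line integral of $\partial_{x_i}u$ in each coordinate direction and applying the iterated Loomis--Whitney--Hölder argument, then extending by density) with the elementary Hölder interpolation $\|u\|_{L^p}\le\|u\|_{L^{dq/(d-q)}}^{\theta}\|u\|_{L^r}^{1-\theta}$. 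Matching the exponents via the scaling relation then delivers the bound, while the borderline and endpoint ranges ($q\ge d$, $p=\infty$, $\theta\in\{0,1\}$) I would dispose of separately, either trivially or by a standard truncation/Morrey-type argument.

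To reach general $0\le j<m$, I would argue by induction on $m$ using the fundamental intermediate-derivative interpolation estimate
\[
\|D^{j}u\|_{L^p(\mathbb{R}^d)}\le C\,\|D^{m}u\|_{L^q(\mathbb{R}^d)}^{\theta}\,\|u\|_{L^r(\mathbb{R}^d)}^{1-\theta},
\]
whose core is the one-step convexity bound $\|D^{\ell}u\|_{L^a}\le C\|D^{\ell+1}u\|_{L^a}^{1/2}\|D^{\ell-1}u\|_{L^a}^{1/2}$, obtained by integration by parts and Cauchy--Schwarz, then iterated and combined with the base case, the intermediate exponents being chosen at each stage so as to preserve the scaling relation. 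I expect the main obstacle to be exactly this induction: keeping the full family of Lebesgue exponents consistent with the single scaling constraint while staying inside the admissible ranges where each elementary estimate is valid, and in particular handling the exceptional configurations (for instance $m-j-d/q$ a nonnegative integer, or $r=\infty$) in which the naive interpolation degenerates and an endpoint or logarithmic substitute is required. Once the whole-space inequality is secured across all admissible parameters, the extension/restriction argument of the first paragraph completes the proof on $\Omega$.
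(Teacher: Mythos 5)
The paper does not prove this lemma at all: it is recalled as a classical result and attributed to Nirenberg's 1959 paper, so there is no in-paper argument to compare against. Your sketch follows the standard route to that classical theorem --- Stein extension to reduce to $\mathbb{R}^d$, scaling to identify the exponent relation, the $j=0,\ m=1$ Sobolev/H\"older base case, and induction via intermediate-derivative interpolation --- and as a strategy it is sound; the genuine work, as you yourself note, is hidden in the induction and the exceptional parameter ranges, which you defer rather than carry out. Two points worth flagging. First, your one-step bound $\|D^{\ell}u\|_{L^a}\le C\|D^{\ell+1}u\|_{L^a}^{1/2}\|D^{\ell-1}u\|_{L^a}^{1/2}$ follows from integration by parts and Cauchy--Schwarz only for $a=2$; for general $a$ one needs generalized H\"older (straightforward for $a\ge 2$, more delicate for $1\le a<2$), and moreover this inequality keeps a single exponent on all three norms, so it does not by itself generate the multi-exponent interpolation you need --- the full induction is more involved than the sentence suggests. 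Second, your scaling analysis, if pushed through, would show that the lemma as stated is actually slightly too generous: the classical theorem requires $j/m\le\theta\le 1$, and excludes $\theta=1$ when $m-j-d/q$ is a nonnegative integer; the paper's statement admits all $\theta\in[0,1]$ without these restrictions, so any complete proof must either add them or fail in those configurations. Neither issue invalidates your plan, but both would have to be addressed to turn the sketch into a proof.
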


\begin{Lemma}[General Gronwall inequality \cite{Canon99}] \label{lem:Gronwall} 
	Let $f,g, h$ and $y$ be four locally integrable non-negative functions on $[t_0,\infty)$ such that
	\begin{align*}
		y(t)+\int_{t_0}^t f(s)ds \le C+ \int_{t_0}^t h(s)ds + \int_{t_0}^t g(s)y(s)ds\  \text{ for all }t\ge t_0,
	\end{align*}
	where $C\ge 0$ is any constant. Then 
	\begin{align*}
		y(t)+\int_{t_0}^t f(s)ds \le \left(C+\int_{t_0}^t h(s)ds\right) \exp\left( \int_{t_0}^t g(s)ds\right) \text{ for all }t\ge t_0.
	\end{align*}
\end{Lemma}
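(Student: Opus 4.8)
The plan is to reduce the stated estimate to the classical linear Gronwall inequality by introducing the accumulated right-hand side as an auxiliary function and then running an integrating-factor argument. First I would set
$$
R(t) := C + \int_{t_0}^t h(s)\,ds + \int_{t_0}^t g(s) y(s)\,ds,
$$
so that the hypothesis reads $y(t) + \int_{t_0}^t f(s)\,ds \le R(t)$ for all $t \ge t_0$. Since $f$ is non-negative, this immediately gives the pointwise bound $y(t) \le R(t)$, which is what I will feed back into $R$. Note that $R$ is an indefinite integral of a locally integrable function — here one must read the hypothesis as implicitly asserting that $gy$ is locally integrable, since otherwise the last term is undefined — hence $R$ is absolutely continuous on $[t_0,\infty)$ and differentiable almost everywhere.

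Next I would differentiate: for almost every $t$,
$$
R'(t) = h(t) + g(t) y(t) \le h(t) + g(t) R(t),
$$
using $y \le R$ and $g \ge 0$. Writing $G(t) := \int_{t_0}^t g(s)\,ds$ and multiplying by the integrating factor $e^{-G(t)}$, the product rule for absolutely continuous functions yields
$$
\frac{d}{dt}\Big( R(t)\, e^{-G(t)} \Big) = \big(R'(t) - g(t) R(t)\big)\, e^{-G(t)} \le h(t)\, e^{-G(t)} \quad \text{a.e.}
$$
Integrating from $t_0$ to $t$ and using $G(t_0) = 0$, $R(t_0) = C$, together with $e^{-G(s)} \le 1$ (valid because $g \ge 0$ forces $G \ge 0$), I obtain
$$
R(t)\, e^{-G(t)} \le C + \int_{t_0}^t h(s)\, e^{-G(s)}\,ds \le C + \int_{t_0}^t h(s)\,ds.
$$
Rearranging gives $R(t) \le \big(C + \int_{t_0}^t h(s)\,ds\big) e^{G(t)}$, and combining this with $y(t) + \int_{t_0}^t f(s)\,ds \le R(t)$ delivers the claimed inequality.

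The only genuinely delicate point is the low regularity of the data: the functions are merely locally integrable, so $y$ need not be continuous and $R$ need not be classically differentiable. I would therefore phrase the differentiation step as holding almost everywhere and invoke the fundamental theorem of calculus in its Lebesgue (absolute continuity) form rather than the classical one. With that caveat, every manipulation — the product rule, the integration of the differential inequality, and the use of $e^{-G} \le 1$ — is routine. An alternative that sidesteps differentiation entirely is to iterate the integral inequality in Picard fashion, producing the exponential series $\sum_{n} \frac{1}{n!}\big(\int_{t_0}^t g\big)^n$ directly; I would keep this in reserve as a regularity-robust fallback, but the integrating-factor route is shorter and is the one I would write out in full.
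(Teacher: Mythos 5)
Your proof is correct. The paper does not prove this lemma at all --- it is recalled from the cited reference \cite{Canon99} and used as a black box --- so there is nothing to compare against; your integrating-factor argument (bounding $y$ by the accumulated right-hand side $R$, differentiating $R$ almost everywhere, and integrating $\frac{d}{dt}\bigl(R e^{-G}\bigr)\le h e^{-G}$ via the absolute-continuity form of the fundamental theorem of calculus) is the standard proof, and your attention to the low regularity of the data, including the observation that local integrability of $gy$ is implicit in the hypothesis, is exactly the right level of care.
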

We now present a version of the nonlinear generalization of Gronwall’s inequality, which will be instrumental in our subsequent analysis.
\begin{Theorem}[Nonlinear generalization of Gronwall’s inequality {\cite[Theorem 21]{DragSS}}]\label{lem:NonlinGronwall}
Let $\zeta$ be a non-negative function that satisfies the integral inequality
\begin{align*}
	\zeta(t)\le C+ \int_{t_0}^t \left(  a(s)\zeta(s) + b(s)\zeta^{\vartheta}(s) \right) ds, \ C\ge 0, \ 0\le \vartheta<1,
\end{align*}
where $a$ and $b$ are locally integrable non-negative functions on $[t_0,\infty).$ Then, the following inequality holds: 
\begin{align*}
	\zeta(t) \le \left\lbrace C^{1-\vartheta} \exp{\left[(1-\vartheta)\int_{t_0}^ta(s)ds\right]} + (1-\vartheta)\int_{t_0}^t b(s) \exp{\left[(1-\vartheta)\int_{s}^t a(r)dr \right]} ds\right\rbrace^{\frac{1}{1-\vartheta}}.
\end{align*}
\end{Theorem}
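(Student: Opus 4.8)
The plan is to reduce the nonlinear integral inequality to a linear differential inequality via a Bernoulli-type substitution, and then finish with the classical integrating-factor argument. First I would introduce the right-hand side as an auxiliary function
\[
u(t) := C + \int_{t_0}^t \bigl( a(s)\zeta(s) + b(s)\zeta^{\vartheta}(s) \bigr)\, ds,
\]
so that $\zeta(t) \le u(t)$ with $u(t_0) = C$, and $u$ is locally absolutely continuous as the indefinite integral of a locally integrable function. Since $a, b \ge 0$ and the map $x \mapsto x^{\vartheta}$ is nondecreasing on $[0,\infty)$ for $0 \le \vartheta < 1$, differentiating and inserting the bound $\zeta \le u$ into $u'$ yields the Bernoulli differential inequality
\[
u'(t) \le a(t)\, u(t) + b(t)\, u^{\vartheta}(t) \quad \text{for a.e. } t \ge t_0 .
\]

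Next I would linearize. To license the division by $u^{\vartheta}$, I would first replace $C$ by $C + \varepsilon$ with $\varepsilon > 0$, so that the corresponding auxiliary function $u_\varepsilon$ is bounded below by $\varepsilon$; the final estimate then follows by letting $\varepsilon \to 0^{+}$. Setting $v := u_\varepsilon^{\,1-\vartheta}$ and using $v' = (1-\vartheta) u_\varepsilon^{-\vartheta} u_\varepsilon'$, I would divide the Bernoulli inequality by $u_\varepsilon^{\vartheta} > 0$ to obtain the linear differential inequality
\[
v'(t) \le (1-\vartheta)\, a(t)\, v(t) + (1-\vartheta)\, b(t) .
\]
Multiplying by the integrating factor $\exp\!\bigl[-(1-\vartheta)\int_{t_0}^t a(r)\, dr\bigr]$, recognizing the left-hand side as an exact derivative, and integrating from $t_0$ to $t$ (with $v(t_0) = (C+\varepsilon)^{1-\vartheta}$) gives
\[
v(t) \le (C+\varepsilon)^{1-\vartheta} \exp\!\Bigl[(1-\vartheta)\!\int_{t_0}^t a(r)\, dr\Bigr] + (1-\vartheta)\!\int_{t_0}^t b(s)\, \exp\!\Bigl[(1-\vartheta)\!\int_{s}^t a(r)\, dr\Bigr] ds ,
\]
after simplifying the ratio of exponentials. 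Since $\zeta \le u_\varepsilon = v^{1/(1-\vartheta)}$ and $x \mapsto x^{1/(1-\vartheta)}$ is increasing, raising both sides to the power $1/(1-\vartheta)$ and sending $\varepsilon \to 0^{+}$ yields the stated bound.

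The only genuinely delicate point is the positivity and regularity bookkeeping: dividing by $u^{\vartheta}$ is illegitimate whenever the right-hand side can vanish (in particular when $C=0$), which is precisely why the $\varepsilon$-regularization is needed, and one must verify that the substitution and integrating-factor steps are valid in the almost-everywhere sense appropriate for absolutely continuous functions rather than requiring classical differentiability of $u$. Everything beyond this is a routine linear Gronwall computation, so I expect the technical handling of the vanishing right-hand side to be the main obstacle.
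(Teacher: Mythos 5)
Your argument is correct: the paper does not prove this statement (it is recalled verbatim from the cited reference \cite[Theorem 21]{DragSS}), and your Bernoulli-substitution proof --- auxiliary function $u$, the $\varepsilon$-regularization to justify dividing by $u^{\vartheta}$, the change of variable $v=u^{1-\vartheta}$, the integrating factor, and the passage $\varepsilon\to 0^{+}$ --- is the standard and complete derivation of exactly this bound. No gaps.
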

\section{Well-posedness and Regularity} \label{sec: Well-R}
In this section, we begin by introducing the principal system associated with \eqref{eqn:nlmodel} and establish its well-posedness and regularity properties. These results are derived by considering $u$, not as a feedback control, but as prescribed boundary data belonging to an appropriate trace space (i.e., a time-dependent Sobolev space). Later, for the purpose of stabilization, we employ an elliptic lifting. However, the results obtained in this section remain valid under such a lifting framework as well.

We now consider the principal system given by:
\begin{equation}\label{eqn:lmodel}
\left\{
\begin{aligned}
& \x_t-\eta\Delta\x+\beta\gamma\x-\int_0^t e^{-\delta(t-s)}\Delta\x(s) \, ds = f,\ \text{ in } \ Q ,\\
&   \x(x,t)=\ub(x,t) \, \text{ for all } \,(x,t)\in \Sigma_1 , \quad  \frac{\partial \x}{\partial n}= 0, \  \text{ on }\ \Sigma_2,\\&\x(x,0)=w_0(x):=\y_0(x)-\y_\infty(x) \ \text{ for all } \ x\in \Omega.
\end{aligned}
\right.
\end{equation}Let us introduce the Dirichlet map $D_p$ as follows:$$H^{\frac{1}{2},\frac{1}{4}}(\Sigma_1) \ni D_p(u)=p\in  L^\infty(0,\infty; L^2(\Omega))\cap L^2(0,\infty; H^1(\Omega))\cap H^1(0,\infty; H^{-1}(\Omega)), $$
where $p$ satisfies the following:
\begin{equation}\label{eqn:plift}
\left\{
\begin{aligned}
&\partial_tp-\eta\Delta p=0, \ \text{ in }\ Q,\\
&p = u,\  \text{ on }\ \Sigma_1, \  \frac{\partial p}{\partial n} =0,\  \text{ on }\ \Sigma_2,\\
& p(x,0)=0, \ \text{ for all } \ x \in \Omega.
\end{aligned}
\right.
\end{equation}
Note that $D_p$ is well-defined due to \cite[Section 15.5]{LioMag-V2}, also the following estimate holds: for all $t\geq 0$,
\begin{align}\label{weak_p}
\sup_{t\in[0,\infty)}\|p(t)\|^2+\eta\int_0^t\|p(s)\|^2_{H^1} ds\leq C\|u\|^2_{H^{\frac{1}{2},\frac{1}{4}}((0,\infty)\times\Gamma_1)},
\end{align}
for some constant $C=C(\eta, Tr)>0.$

\begin{Theorem}\label{existence_weak}
For given $w_0\in L^2(\Omega)$, $f\in L^2(0,\infty;L^2(\Omega)),$ and $u\in H^{\frac{1}{2},\frac{1}{4}}(\Sigma_1)$, the problem \eqref{eqn:lmodel} has a unique solution $w$ such that 
$$w\in L^\infty(0,\infty;L^2(\Omega))\cap L^2(0,\infty;H^1(\Omega))\cap H^1(0,\infty; H^1(\Omega)').$$
\end{Theorem}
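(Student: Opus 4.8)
The plan is to reduce the problem to a homogeneous-boundary system via the parabolic lifting $p = D_p(u)$ introduced in \eqref{eqn:plift}, and then solve the resulting system by a Faedo--Galerkin approximation. First I would set $v = w - p$, so that $v$ satisfies zero Dirichlet data on $\Sigma_1$ and zero Neumann data on $\Sigma_2$, with initial condition $v(0) = w_0$. Substituting $w = v + p$ into \eqref{eqn:lmodel} and using $\partial_t p - \eta\Delta p = 0$, the equation for $v$ becomes
\begin{align*}
v_t - \eta\Delta v + \beta\gamma v - \int_0^t e^{-\delta(t-s)}\Delta v(s)\,ds = \tilde f,
\end{align*}
where the new forcing term $\tilde f = f - \beta\gamma p + \int_0^t e^{-\delta(t-s)}\Delta p(s)\,ds$ collects all the $p$-contributions. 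The memory term involving $\Delta p$ should be handled by integration by parts in space (moving the Laplacian onto test functions) so that only $\nabla p \in L^2(0,\infty;L^2)$ is required, which is exactly the regularity guaranteed by \eqref{weak_p}. This way $\tilde f$ lives in a space dual to $H^1(\Omega)$ (relative to $\mathbf D(A)$), and $v$ has homogeneous boundary conditions placing it in the domain of the self-adjoint operator $A$.

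Next I would construct Galerkin approximations $v_n = \sum_{m=1}^n c_m(t)\varphi_m$ using the eigenbasis $\{\varphi_m\}$ from \eqref{eqevlaplace}. Projecting the equation onto $\mathrm{span}\{\varphi_1,\dots,\varphi_n\}$ yields a system of integro-ODEs for the coefficients $c_m(t)$; because the memory kernel $e^{-\delta(t-s)}$ is smooth and the nonlinear terms are absent, the system is linear with a Volterra structure, so local existence and global continuation of $v_n$ follow from standard Volterra ODE theory. The crucial work is the \emph{a priori} estimate: testing with $v_n$ (i.e., multiplying by $c_m$ and summing) gives
\begin{align*}
\frac12\frac{d}{dt}\|v_n\|^2 + \eta\|\nabla v_n\|^2 + \beta\gamma\|v_n\|^2 = (\tilde f, v_n) - \int_0^t e^{-\delta(t-s)}(\nabla v_n(s),\nabla v_n(t))\,ds,
\end{align*}
where the diffusion and memory terms have been integrated by parts. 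The memory term is the delicate one: I would bound it using Young's inequality \eqref{eqPR-YoungIneq} together with Young's convolution inequality, absorbing part into $\eta\|\nabla v_n\|^2$ and controlling the rest by $\int_0^t\|\nabla v_n(s)\|^2\,ds$ via the integrability of $e^{-\delta\cdot}$. Integrating in time and applying the Gronwall inequality (Lemma \ref{lem:Gronwall}) then yields uniform bounds for $v_n$ in $L^\infty(0,\infty;L^2)\cap L^2(0,\infty;H^1)$.

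The main obstacle I anticipate is establishing the time-derivative regularity $w_t \in L^2(0,\infty;H^1(\Omega)')$, since the memory term couples all past times and does not immediately yield a bound on $v_t$ in the dual space. To handle this, I would estimate $\langle v_t, \psi\rangle$ for test functions $\psi \in H^1(\Omega)$ by pairing the equation against $\psi$, using the previously derived bounds on $\nabla v$ and $\tilde f$, and treating the convolution as a bounded operator from $L^2(0,\infty;L^2)$ into itself; this gives the claimed dual-space bound on $v_t$, hence on $w_t = v_t + p_t$ after invoking $p_t \in H^1(0,\infty;H^{-1})\hookrightarrow L^2(0,\infty;H^{-1})$ from the Dirichlet-map regularity. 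With uniform bounds in hand, I would pass to the limit by extracting weakly (and weak-$*$) convergent subsequences, identifying the limit $v$ as a solution, and verifying that it attains the initial datum $w_0$ using the Aubin--Lions compactness lemma for strong $L^2$ convergence. Finally, uniqueness follows by the standard energy argument applied to the difference of two solutions: since the problem is linear, the difference satisfies the homogeneous equation with zero data, and the same energy estimate (with $\tilde f = 0$ and zero initial value) forces it to vanish.
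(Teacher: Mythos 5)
Your reduction via the parabolic lifting $p=D_p(u)$ and the passage to the homogeneous-boundary variable $v=w-p$ is exactly the paper's first step; the paper then simply cites an existing well-posedness theorem (\cite[Theorem 2.1]{CM06}) for the resulting homogeneous system, whereas you propose to carry out the Faedo--Galerkin construction yourself (which is essentially what the paper does for the stronger regularity statement in Theorem \ref{reg_sol}). Your handling of the lifted memory term $\int_0^te^{-\delta(t-s)}\Delta p(s)\,ds$ by moving the Laplacian onto test functions is legitimate (the boundary contributions vanish because the test functions vanish on $\Gamma_1$ and $\partial p/\partial n=0$ on $\Gamma_2$), and differs only cosmetically from the paper's substitution $\Delta p=\eta^{-1}p_t$ followed by integration by parts in time.

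The genuine gap is in your a priori estimate for the memory term. After Young's inequality and Young's convolution inequality, the best bound available for $\int_0^T\left|\int_0^te^{-\delta(t-s)}(\nabla v_n(s),\nabla v_n(t))\,ds\right|dt$ is $\tfrac{1}{\delta}\int_0^T\|\nabla v_n(t)\|^2\,dt$ (since $\|e^{-\delta\cdot}\|_{L^1(0,\infty)}=1/\delta$), and this can be absorbed into the dissipation $\eta\int_0^T\|\nabla v_n\|^2\,dt$ only when $\eta\delta>1$, a restriction the theorem does not impose. Gronwall's inequality (Lemma \ref{lem:Gronwall}) cannot rescue the leftover either: the residual involves $\|\nabla v_n(s)\|^2$, which plays the role of the dissipation $f(s)$ in that lemma, not of the quantity $y(s)=\|v_n(s)\|^2$ being estimated, so it is not of the admissible form $\int_0^tg(s)y(s)\,ds$; and on the infinite horizon $(0,\infty)$ no $T$-dependent constant would be acceptable in any case. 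The standard fix, which the paper invokes (via \cite[Lemma A.2]{MTMSSS}) in the proof of Theorem \ref{reg_sol}, is the positive-definiteness of the completely monotone kernel $e^{-\delta t}$: after integration in time one has $\int_0^T\int_0^te^{-\delta(t-s)}(\nabla v_n(s),\nabla v_n(t))\,ds\,dt\ge0$, so the memory term sits on the left-hand side with a favorable sign and can simply be dropped. With that replacement your energy estimate closes, and the remainder of your argument (time-derivative bound in the dual space, weak limits, attainment of the initial datum, uniqueness by linearity) goes through.
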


\begin{proof}
To establish the existence of a weak solution, we first aim to impose homogeneous (zero) boundary conditions in \eqref{eqn:lmodel}. To this end, we define a new variable $z = \x - p$, where $p$ is the lifting function satisfying \eqref{eqn:plift}. Substituting this into \eqref{eqn:lmodel}, we obtain:
\begin{equation}\label{equ: elift_lin_model}
\left\{ 
\begin{aligned}
&z_t = \eta\Delta\zb-\beta\gamma\zb+\left(\frac{1}{\eta}-\beta\gamma\right) p+\int_0^t e^{-\delta(t-s)}\Delta\zb(\cdot,s) \, ds \\&\qquad + \frac{\delta}{\eta}\int_0^t e^{-\delta(t-s)}p(\cdot,s) \, ds +f,\  \text{ in } \ Q,\\
&\zb=0,\  \text{ on } \Sigma_1, \ \frac{\partial\zb}{\partial n}=0,\  \text{ on }\ \Sigma_2,\\
&\zb(x,0)=w_0 \ \text{ for all } \ x\in \Omega.
\end{aligned}\right.
\end{equation} 
Using \eqref{weak_p}, one can easily see that $(\frac{1}{\eta}-\beta\gamma) p+\frac{\delta}{\eta}\int_0^t e^{-\delta(t-s)}p(\cdot,s) \, ds\in L^2(0,\infty; H^1(\Omega))$. Then, using \cite[Theorem 2.1]{CM06}, we conclude that exist a unique solution $z$ of \eqref{equ: elift_lin_model} such that $z\in L^\infty(0,\infty; L^2(\Omega))\cap L^2(0,\infty; H^1(\Omega))\cap H^1(0,\infty; H^{-1}(\Omega))$, which yields the existence of solution $w$ of \eqref{eqn:lmodel}.
\end{proof}
We now aim to establish a regularity result for the system \eqref{eqn:lmodel} by leveraging the regularity result for the lifting equation \eqref{eqn:plift}. To this end, we first recall \cite[Lemma 4.7]{AGVM25}:
\begin{Lemma}
For a given $u\in H^{\frac{3}{2}, \frac{3}{4}}(\Sigma_1),$ the system \eqref{eqn:plift} admits a unique solution $p$ such that $$p\in L^\infty(0,\infty; H^1(\Omega))\cap L^2(0,\infty; H^2(\Omega))\cap H^1(0,\infty; L^2(\Omega)).$$ Moreover, $p$ satisfies the following estimate for all $t\geq 0$:
\begin{align}
\sup_{t\in[0,\infty)}\|p(t)\|^2_{H^1(\Omega)}+\eta\int_0^t\|p(s)\|^2_{H^2(\Omega)} ds\leq C\|u\|_{H^{\frac{3}{2}, \frac{3}{4}}(\Sigma_1)},
\end{align}
for some constant $C=C(\eta, Tr)>0.$
\end{Lemma}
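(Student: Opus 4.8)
The plan is to prove this one-level-higher regularity statement by exactly the lifting philosophy already used for Theorem~\ref{existence_weak}, but now one derivative up, combined with maximal parabolic regularity for the mixed Dirichlet--Neumann heat semigroup generated by $A=-\Delta$ with domain $\mathbf{D}(A)$ from \eqref{eqevlaplace}. First I would recall that the anisotropic space $H^{\frac{3}{2},\frac{3}{4}}(\Sigma_1)$ is precisely the lateral trace space of $H^{2,1}(Q):=L^2(0,\infty;H^2(\Omega))\cap H^1(0,\infty;L^2(\Omega))$ by the trace theory of Lions--Magenes \cite{LioMag-V2}. Since the temporal index $\frac{3}{4}>\frac{1}{2}$, the initial trace $u(\cdot,0)$ is well defined, and compatibility with $p(\cdot,0)=0$ forces $u(\cdot,0)=0$; under this compatibility one obtains a lifting $P\in H^{2,1}(Q)$ with $P=u$ on $\Sigma_1$, $\partial_n P=0$ on $\Sigma_2$, $P(\cdot,0)=0$, and $\|P\|_{H^{2,1}(Q)}\le C\|u\|_{H^{\frac{3}{2},\frac{3}{4}}(\Sigma_1)}$.

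Next I would reduce to homogeneous boundary data. Setting $q:=p-P$, the function $q$ solves $\partial_t q-\eta\Delta q=g$ in $Q$ with source $g:=-\partial_t P+\eta\Delta P$, subject to the homogeneous mixed conditions $q=0$ on $\Sigma_1$, $\partial_n q=0$ on $\Sigma_2$, and $q(\cdot,0)=0$. Since $P\in H^1(0,\infty;L^2)\cap L^2(0,\infty;H^2)$, both $\partial_t P$ and $\Delta P$ lie in $L^2(0,\infty;L^2(\Omega))$, so $g\in L^2(0,\infty;L^2(\Omega))$ with $\|g\|_{L^2(0,\infty;L^2)}\le C\|u\|_{H^{\frac{3}{2},\frac{3}{4}}(\Sigma_1)}$.

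Then I would invoke maximal regularity for $q$. As $A$ is self-adjoint and positive on $L^2(\Omega)$, it generates a bounded analytic semigroup, so the zero-initial-data problem enjoys maximal $L^2$-regularity: $q\in H^1(0,\infty;L^2)\cap L^2(0,\infty;\mathbf{D}(A))$ with $\|\partial_t q\|_{L^2(L^2)}+\|Aq\|_{L^2(L^2)}\le C\|g\|_{L^2(L^2)}$, and elliptic regularity $\mathbf{D}(A)\hookrightarrow H^2(\Omega)$ upgrades this to $q\in L^2(0,\infty;H^2)$. The endpoint trace of this maximal-regularity class is the interpolation space $(L^2,\mathbf{D}(A))_{1/2,2}=\mathbf{D}(A^{1/2})$, which is $H^1$ with the Dirichlet condition on $\Gamma_1$ built in, giving $q\in L^\infty(0,\infty;H^1)$; equivalently, and more in the spirit of this paper, one tests the equation for $q$ against $Aq$ and applies the Gronwall inequality of Lemma~\ref{lem:Gronwall} to control $\sup_t\|q(t)\|_{H^1}^2+\eta\int_0^t\|q(s)\|_{H^2}^2\,ds$. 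Finally $p=q+P$ inherits all three regularities, summing the bounds of the previous steps yields the stated estimate, and uniqueness is immediate from linearity: the difference of two solutions solves \eqref{eqn:plift} with $u=0$ and zero initial data, hence vanishes by the energy estimate underlying Theorem~\ref{existence_weak}.

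The hard part will be the first step: producing a single lifting $P$ that simultaneously realizes the Dirichlet trace on $\Sigma_1$, the homogeneous Neumann condition on $\Sigma_2$, and zero initial data, all with a quantitative norm bound. The genuine subtlety is the compatibility at $t=0$ together with the matching at the interface $\overline{\Gamma_1}\cap\overline{\Gamma_2}$ where the Dirichlet and Neumann parts of the boundary meet, since it is there that the anisotropic trace and extension theory for mixed boundary conditions must be used in full strength (this is exactly what \cite[Lemma 4.7]{AGVM25} supplies). Once the lifting is secured, steps two through four are routine parabolic arguments.
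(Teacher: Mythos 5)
The paper does not prove this lemma at all: it is imported verbatim as a citation of \cite[Lemma 4.7]{AGVM25}, so there is no internal proof to compare yours against. Your sketch is the standard and essentially correct way to establish it: lift $u$ to $P\in L^2(0,\infty;H^2)\cap H^1(0,\infty;L^2)$ by Lions--Magenes trace/extension theory, reduce to the homogeneous mixed problem for $q=p-P$ with an $L^2(0,\infty;L^2)$ source, and close with maximal $L^2$-regularity (valid on the infinite time interval because $A\ge\lambda_1>0$) or, equivalently, the energy estimate obtained by testing against $Aq$ as in the proof of Theorem \ref{reg_sol}. Two caveats deserve emphasis beyond what you wrote. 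First, as you note, surjectivity of the lateral trace onto $H^{\frac32,\frac34}(\Sigma_1)$ with zero initial datum requires the compatibility condition $u(\cdot,0)=0$ on $\Gamma_1$ (since $\frac34>\frac12$ the time-trace at $t=0$ is well defined); the lemma as stated suppresses this hypothesis, and in the paper's application the feedback control \eqref{fcontrol} need not vanish at $t=0$, so this is a genuine hidden assumption rather than a removable technicality. Second, your step ``elliptic regularity $\mathbf{D}(A)\hookrightarrow H^2(\Omega)$'' is delicate for mixed Dirichlet--Neumann conditions: full $H^2$ regularity can fail near the interface $\overline{\Gamma_1}\cap\overline{\Gamma_2}$ unless the two boundary pieces are separated components or meet suitably; the paper tacitly assumes this throughout (e.g.\ in \eqref{eqevlaplace} and Theorem \ref{reg_sol}), so you are consistent with its framework, but a self-contained proof would have to state the geometric hypothesis. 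With those provisos your argument is sound, and it supplies an actual proof where the paper supplies only a reference.
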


\begin{Theorem}\label{reg_sol}
For given $w_0\in H^1(\Omega)$, $u\in$ $H^{\frac{3}{2}, \frac{3}{4}}(\Sigma_1)$,  and $f\in L^2(0,\infty;L^2(\Omega)),$ the system \eqref{eqn:lmodel} admits a unique solution $$w\in L^\infty(0,\infty; H^1(\Omega))\cap L^2(0,\infty; H^2(\Omega))\cap L^{2(\kappa+1)}(0, \infty; L^{6(\kappa+1)}(\Omega))\cap H^1(0,\infty; L^2(\Omega)),$$ satisfying
   \begin{align}\label{reg_est}
      & \|w\|_{L^\infty(0,\infty; H^1)}+\|w\|_{L^2(0,\infty;H^2)}+\|w\|_{H^1(0,\infty; L^2)}+\|w\|_{L^{2(\kappa+1)}(0, \infty; L^{6(\kappa+1)})} \no\\ &\qquad
      \leq C\left(\|w_0\|_{H^1}+\|u\|_{H^{\frac{1}{2}, \frac{1}{4}}(\Sigma_1)}+\|f\|_{L^2(0,\infty;L^2)}\right).
   \end{align}
\end{Theorem}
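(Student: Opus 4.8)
The plan is to reduce \eqref{eqn:lmodel} to a homogeneous boundary value problem and then run a maximal-regularity argument that upgrades the unique weak solution of Theorem \ref{existence_weak} to the claimed class. As in the proof of Theorem \ref{existence_weak}, I set $z=w-p$, but now I invoke the stronger lifting estimate from the lemma preceding this theorem: since $u\in H^{\frac32,\frac34}(\Sigma_1)$, the solution $p$ of \eqref{eqn:plift} lies in $L^\infty(0,\infty;H^1)\cap L^2(0,\infty;H^2)\cap H^1(0,\infty;L^2)$. Using $p_t=\eta\Delta p$ and integrating the memory term by parts exactly as in the derivation of \eqref{equ: elift_lin_model}, the function $z$ solves a problem with homogeneous Dirichlet/Neumann data on $\Sigma_1,\Sigma_2$, initial value $z(0)=w_0\in H^1$, and a right-hand side $F$ consisting of $f$ together with terms affine in $p$ and its exponential convolution. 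The purpose of the improved lifting is precisely that $F\in L^2(0,\infty;L^2)$, with $\|F\|_{L^2(L^2)}\le C\big(\|f\|_{L^2(L^2)}+\|u\|_{H^{\frac32,\frac34}(\Sigma_1)}\big)$.

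The second step is the maximal-regularity estimate for $z$. Since $w_0\in H^1$ is exactly the trace (interpolation) space $(L^2,\mathbf{D}(A))_{1/2,2}$ for the operator $A$ of \eqref{eqevlaplace}, the purely parabolic part $z_t-\eta\Delta z+\beta\gamma z=F$ with homogeneous data enjoys $z\in L^\infty(0,\infty;H^1)\cap L^2(0,\infty;H^2)\cap H^1(0,\infty;L^2)$, with norm controlled by $\|w_0\|_{H^1}+\|F\|_{L^2(L^2)}$. The genuine difficulty is the nonlocal memory term $\int_0^te^{-\delta(t-s)}\Delta z(s)\,ds$, which lives at the top ($H^2$) order. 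I would treat it through the auxiliary-variable device $\xi(t)=\int_0^te^{-\delta(t-s)}z(s)\,ds$, $\xi_t+\delta\xi=z$, $\xi(0)=0$, converting the integro-differential equation into a \emph{local} parabolic system in $(z,\xi)$ in which the dissipation $\delta>0$ keeps the coupling controlled. Testing the $z$-equation with $z$, with $Az$, and with $z_t$, absorbing the convolution via Young's inequality for convolutions (so that the $L^2(0,\infty;L^2)$ norm of the memory term is at most $\delta^{-1}\|\Delta z\|_{L^2(0,\infty;L^2)}$), and closing with the Gronwall inequality of Lemma \ref{lem:Gronwall}, should give the three bounds. The exponential stability of the principal system (the modal characteristic roots of $\sigma^2+(\eta\lambda_m+\beta\gamma+\delta)\sigma+(\eta\lambda_m+\beta\gamma)\delta+\lambda_m=0$ all lie in the open left half-plane, uniformly in $m$ because $\lambda_1>0$) is what makes these bounds uniform on $(0,\infty)$ rather than only on finite intervals. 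I expect this absorption of the top-order memory term, uniformly in time, to be the main obstacle.

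Finally, the mixed space-time integrability $z\in L^{2(\kappa+1)}(0,\infty;L^{6(\kappa+1)})$ follows from the anisotropic parabolic Sobolev embedding of the maximal-regularity space $L^2(0,\infty;H^2)\cap H^1(0,\infty;L^2)$ (equivalently $W^{2,1}_2(Q)$, whose trace at $t=0$ is again $H^1$). On a bounded domain the admissible exponent pairs $(r,p)$ for such an embedding are characterized by $\frac{d}{p}+\frac{2}{r}\ge\frac{d-2}{2}$, and a direct computation shows that $(r,p)=\big(2(\kappa+1),6(\kappa+1)\big)$ satisfies this inequality precisely when $\kappa$ lies in the admissible range \eqref{eqdef:delta} (the restriction $\kappa\le 2$ for $d=3$ is exactly the borderline case); concretely, one realizes the bound through Lemma \ref{thm:G-NinBdd} combined with interpolation in the time variable. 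Transferring back via $w=z+p$ and adding the lifting bounds of the preceding lemma places $w$ in the stated class and yields \eqref{reg_est} (with $\|u\|_{H^{\frac32,\frac34}(\Sigma_1)}$ in place of the weaker trace norm written there, matching the lifting hypothesis).
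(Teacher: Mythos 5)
Your overall architecture matches the paper's: the same decomposition $z=w-p$ with the parabolic lifting of the preceding lemma, energy estimates obtained by testing with $Az$ and $z_t$, and a final transfer back to $w=z+p$. (The paper realizes these estimates through a Faedo--Galerkin scheme rather than abstract maximal regularity, and you are right that the trace norm in \eqref{reg_est} should be the $H^{\frac32,\frac34}(\Sigma_1)$ norm used in \eqref{reg_unif_est}.) Your route to the $L^{2(\kappa+1)}(0,\infty;L^{6(\kappa+1)})$ bound is genuinely different: the paper tests with $|z_n|^{2\kappa}z_n$, controls $\int_0^\infty\||z_n|^{\kappa}\nabla z_n\|^2dt$ via the nonlinear Gronwall inequality of Theorem \ref{lem:NonlinGronwall}, and then uses $\|z_n^{\kappa+1}\|_{L^6}\le C(\kappa+1)\||z_n|^\kappa\nabla z_n\|$; your anisotropic embedding of $L^2(0,\infty;H^2)\cap H^1(0,\infty;L^2)\cap L^\infty(0,\infty;H^1)$ also works for the admissible $\kappa$ in \eqref{eqdef:delta}, provided you additionally invoke the $L^2(0,\infty;H^1)$ bound coming from the damping $\beta\gamma>0$ (on the infinite time interval the non-borderline cases, e.g. $d=3$, $\kappa=1$, do not follow from $L^2(H^2)\cap L^\infty(H^1)$ alone).

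The genuine gap is in your treatment of the memory term in the $H^2$-level estimate. Young's convolution inequality gives $\|K\ast\Delta z\|_{L^2(0,\infty;L^2)}\le\delta^{-1}\|\Delta z\|_{L^2(0,\infty;L^2)}$, so after testing with $Az$ the time-integrated memory contribution is bounded by $\delta^{-1}\|\Delta z\|^2_{L^2(0,t;L^2)}$, and absorbing this into $\eta\int_0^t\|\Delta z\|^2\,ds$ requires $\eta\delta>1$ (or a comparable smallness relation), which is nowhere assumed: $\eta$ and $\delta$ are arbitrary positive constants. This is not a removable technicality in your scheme, because at the stage where you test with $Az$ the quantity $\|\Delta z\|_{L^2L^2}$ is exactly what you are trying to produce, so there is no independent bound to fall back on. The paper closes this step differently: the exponential kernel is positive definite, so $\int_0^t\big((K\ast\Delta z)(\tau),\Delta z(\tau)\big)\,d\tau\ge0$ (this is the content of the cited Lemma A.2 of \cite{MTMSSS}, used explicitly in the $L^p$ step and implicitly in \eqref{reg_unif_est}), and the memory term can simply be dropped from the left-hand side with no constant at all. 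Your auxiliary-variable idea $\xi_t+\delta\xi=z$, $\xi(0)=0$ can be made to deliver the same conclusion --- testing the $z$-equation with $-\Delta z$ and the Laplacian of the $\xi$-equation with $\Delta\xi$ makes the cross terms $\pm(\Delta\xi,\Delta z)$ cancel exactly, which is just the energy identity behind the kernel's positivity --- but that is not the mechanism you describe; as written, the absorption step fails for general $\eta,\delta>0$. Note also that Young's inequality \emph{is} legitimate in the $z_t$ estimate, as in \eqref{reg_time_est}, precisely because $\|\Delta z\|_{L^2L^2}$ has by then already been bounded.
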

\begin{proof}
 We prove the theorem using the Faedo-Galerkin approximation method.
 \vskip 0.2 cm
\noindent\emph{Faedo-Galerkin approximation:}  Let $\{\vp_1,   \ldots, \vp_n, \ldots\}$ be the set of  orthonormal basis in $L^2(\Omega)$ described in Section \ref{sec:N-P}. For any $n \in \mathbb{N},$ define the finite dimensional space $H_n$ as  $H_n:=\mathrm{span}\{\vp_1,\ldots, \vp_n\},$ and projection operators $P_n: L^2(\Omega) \to H_n$ as $P_n w=\displaystyle\sum_{i=1}^n (w, \vp_i)\vp_i.$ Note that $H_n \subset H^1(\Omega)$ and any element $v_n$ in $H_n$ satisfies $v_n=0$ on $\Gamma_1$ in the sense of trace.  Now, we are interested in finding a function $\zb_n:[0,T]\rightarrow H_n$ such that $\displaystyle \zb_n(x,t)=\sum_{i=1}^{n}a_n^i(t)\vp_i(x),$ where $n$ being a fixed positive integer. The smooth functions $a_n^i(t)$, $0\leq t\leq T,$ and $i=1,2,\ldots, n$, are chosen such that $a_n^i(0)=(z_0,\vp_i)$ and 
\begin{align}\label{equ:gal1}
\langle z'_n(t), v_n\rangle+&\eta(\nabla z_n(t),\nabla v_n) +((K\ast\nabla z_n)(t),\nabla v_n)+\beta\gamma(z_n(t),v_n)\no\\&=\frac{\delta}{\eta}((K\ast p_n)(t), v_n)+\left(\frac{1}{\eta}-\beta\gamma\right)(p_n(t), v_n)+(f_n(t), v_n),
\end{align}
for all $v_n\in H_n$, and for a.e. $0\leq t\leq T$. In the above expression $f_n=P_n f$ and $p_n=P_np$, where $P_n$ being the projection operator as defined above.  Also, for the notational convenience, we have denoted the kernel 
$K(t)= e^{-\delta t}$ and $(K\ast\zb)(t)=\int_0^t e^{-\delta(t-s)}\zb(s) ds.$
\vskip 0.2 cm
\noindent
\emph{A priori estimates:} Taking $v_n=Az_n$ in \eqref{equ:gal1}, we have
\begin{align}\label{ei1}
\frac{1}{2}\frac{d}{dt}\|\nabla z_n(t)\|^2 & +\eta\|\Delta z_n(t)\|^2+\beta\gamma\|\nabla z_n(t)\|^2+((K\ast\Delta z_n)(t),\Delta z_n(t))\no\\& =\frac{\delta}{\eta}((K\ast p_n)(t), Az_n(t))+\left(\frac{1}{\eta}-\beta\gamma\right)(p_n(t), Az_n(t))+(f_n, Az_n(t)).
\end{align}
Now, using the estimate  \eqref{weak_p}, H\"older's and Young's inequalities, we estimate the terms in right hand side of \eqref{ei1} as
\begin{align*}
\bigg|\frac{\delta}{\eta}((K\ast p_n), Az_n)\bigg|&=\frac{\delta}{\eta}|((K\ast \nabla p_n), \nabla z_n)\big|\leq C\|\nabla p\|^2+\frac{\beta\gamma}{4}\|\nabla z_n\|^2,\\
\left(\frac{1}{\eta}-\beta\gamma\right)(p_n, Az_n)&\leq C\|\nabla p\|^2+\frac{\beta\gamma}{4}\|\nabla z_n\|^2, \\
(f_n, Az_n)&\leq C\|f\|^2+\frac{\eta}{2}\|\Delta z_n\|^2.
\end{align*}
Then, substituting all these estimates in \eqref{ei1}, we get
\begin{align}
&\frac{d}{dt}\|\nabla z_n(t)\|^2+\eta\|\Delta z_n(t)\|^2+\beta\gamma\|\nabla z_n(t)\|^2\leq C(\|f(t)\|^2+\|\nabla p(t)\|^2),
\end{align}
for some constant $C$ independent of $T$. Next, integrating the inequality from $0$ to $t$, and then applying Lemma \ref{lem:Gronwall} along with the estimate \eqref{weak_p}, we obtain
\begin{align}\label{reg_unif_est}
&\|\nabla z_n(t)\|^2+\eta\int_0^t \left(\|\Delta z_n(t)\|^2+\beta\gamma\|\nabla z_n(t)\|^2\right) dt\nonumber\\& \leq \|w_0\|^2_{H^1}+C\left(\|f\|^2_{L^2(0,\infty; L^2)}+\|u\|^2_{H^{\frac{3}{2},\frac{3}{4}}(\Sigma_1)}\right),
\end{align}
for all $t>0.$
Thus, we have 
\begin{align}\label{eqn-ener-est}
&	\sup_{t \in [0,\infty)}\|\nabla z_n(t)\|^2+\eta\int_0^\infty \left(\|\Delta z_n(t)\|^2+\beta\gamma\|\nabla z_n(t)\|^2\right) dt\nonumber\\& \leq \|w_0\|^2_{H^1}+C\left(\|f\|^2_{L^2(0,\infty; L^2)}+\|u\|^2_{H^{\frac{3}{2},\frac{3}{4}}(\Sigma_1)}\right).
\end{align}
  By using the elliptic regularity theorem (see \cite[Theorem 3.1.2.1]{PG85}), we have 
   \begin{align*}
       z_n\in L^\infty(0, \infty; H_n)\cap L^2(0,\infty; H^2(\Omega)).
   \end{align*}
   \vskip 0.2 cm
   \noindent
\emph{Time derivative estimate:}   Now, to estimate the time derivative, we choose $v_n = z_n'$ in equation \eqref{equ:gal1}, yielding
   \begin{align*}
       &\|z_n'(t)\|^2+\frac{\eta}{2}\frac{d}{dt}\|\nabla z_n(t)\|^2+\frac{\beta\gamma}{2}\frac{d}{dt}\|z_n(t)\|^2 \no\\&  =((K\ast\Delta z_n)(t), z_n'(t))+\frac{\delta}{\eta}((K\ast p_n)(t), z_n'(t))+\left(\frac{1}{\eta}-\beta\gamma \right)(p_n(t), z_n'(t))+(f_n(t), z_n'(t)),
   \end{align*}
for a.e. $t\in[0,T]$.   Choosing $\Tilde{\eta}=\min\{\eta, \beta\gamma\}$,
   and using H{\"o}lder's and Youngs' inequalities, we obtain
   \begin{align*}
       \Tilde{\eta}\frac{d}{dt}\|z_n(t)\|^2_{H^1}+\|z_n'(t)\|^2&\leq C\left(\|(K\ast\Delta z_n)(t)\|^2+\|(K\ast p_n)(t)\|^2+\|p_n(t)\|^2+\|f_n(t)\|^2\right).
      \end{align*}
Integrating the above equation from $0$ to $t,$ and using \eqref{weak_p} and boundedness of the kernel $K(t)=e^{-\delta t}$, we find
\begin{align}\label{reg_time_est}
\Tilde{\eta}\|z_n(t)\|_{H^1}^2+\int_0^t\|z_n'(t)\|^2dt\leq C\bigg(\int_0^t\|\Delta z_n(s)\|^2\, ds+\|u\|_{H^{\frac{1}{2},\frac{1}{4}}(\Sigma_1)}+\|f\|^2_{L^2(0,\infty; L^2)}\bigg).
  \end{align}
 Then, using the estimate \eqref{reg_unif_est}, we obtain the right-hand side of the inequality \eqref{reg_time_est} is independent of $n$, and the left-hand side is bounded for any $t\geq 0$; consequently, it follows that $$z_n'\in L^2(0,\infty; L^2(\Omega)).$$
  \vskip 0.2 cm
 \noindent
\noindent \emph{$L^p$ estimate:} Now, to derive the $L^{2(\kappa+1)}(0, \infty; L^{6(\kappa+1)}(\Omega))$ bound on $z_n$, we select the test function $v_n = P_n(|z_n|^{2\kappa}z_n)$ in equation \eqref{equ:gal1}. Utilizing the self-adjointness and commutativity properties of the projection operator $P_n$ with the operator $A$, we obtain
    \begin{align*}
&(z'_n(t),|z_n(t)|^{2\kappa}z_n(t)) +\eta(\nabla z_n,\nabla(|z_n(t)|^{2\kappa}z_n(t)))
+\beta\gamma(z_n(t),|z_n(t)|^{2\kappa}z_n(t))\no\\& \quad+((K\ast\nabla z_n)(t),\nabla(|z_n(t)|^{2\kappa}z_n(t)))\no\\& \hspace{1cm} =\frac{\delta}{\eta}((K\ast p_n)(t), |z_n(t)|^{2\kappa}z_n(t)) +\left(\frac{1}{\eta}-\beta\gamma\right)(p_n(t), |z_n(t)|^{2\kappa}z_n(t))+(f_n, |z_n(t)|^{2\kappa}z_n(t)),
  \end{align*} 
 for a.e. $t\in[0,T]$. Then, integration by parts gives
  \begin{equation}\label{IP1}
  \begin{aligned}
      &\frac{1}{2(\kappa+1)}\frac{d}{dt}\|z_n(t)\|^{2(\kappa+1)}_{L^{2(\kappa+1)}}+\eta(2\kappa+1)\||z_n(t)|^\kappa\nabla z_n(t)\|^2 +\beta\gamma\|z_n(t)\|^{2(\kappa+1)}_{L^{2(\kappa+1)}}\\
       & \qquad + (2\kappa+1)\Big( (K *\nabla z_n)(t) , |z_n(t)|^{2\kappa} \nabla z_n(t)\Big)  \\
      & \, \leq \frac{\delta}{\eta}((K\ast p_n)(t), |z_n(t)|^{2\kappa}z_n(t))+\left(\frac{1}{\eta}-\beta\gamma\right)(p_n(t), |z_n(t)|^{2\kappa}z_n(t))+(f_n(t), |z_n(t)|^{2\kappa}z_n(t)).
  \end{aligned}
  \end{equation}
 We can estimate the terms in the right hand side of \eqref{IP1} as follows:
 \begin{align*}
     \Big|\frac{\delta}{\eta}\big((K\ast p_n), |z_n|^{2\kappa}z_n\big)\Big|&=\Big|\frac{\delta}{\eta}\big( |z_n|^\kappa (K\ast p_n), |z_n|^{\kappa}z_n\big)\Big|\\&\leq \frac{\delta}{\eta}\||z_n|^{\kappa}z_n\|_{H^1_0}\||z_n|^\kappa (K\ast p_n)\|_{H^{-1}}\\&\leq \frac{\delta}{\eta}\|\nabla(|z_n|^{\kappa}z_n)\|\||z_n|^\kappa (K\ast p_n)\|_{L^{\frac{2\kappa+2}{2\kappa+1}}}\\&\leq C(\kappa+1)\||z_n|^{\kappa}\nabla z_n\|\|z_n|^\kappa (K\ast p_n)\|_{L^{\frac{2\kappa+2}{2\kappa+1}}}\\&\leq 
C(\kappa+1)\||z_n|^{\kappa}\nabla z_n\|\||z_n|^\kappa\|_{L^\frac{2(\kappa+1)}{\kappa}}\|(K\ast p_n)\|\\&\leq\frac{(2\kappa+1)\eta}{6}\||z_n|^{\kappa}\nabla z_n\|^2+C\|z\|^{2\kappa}_{L^{2(\kappa+1)}}\|(K\ast p_n)\|^2,
  \end{align*}
where we have used the fact that $L^{\frac{2\kappa+2}{2\kappa+1}}(\Omega)\hookrightarrow H^{-1}(\Omega)$ for the values of $k$ given in \eqref{eqdef:delta}.    Similarly, we estimate the last two terms in the right hand side of \eqref{IP1} and get
 \begin{align*}
 \left|\left(\frac{1}{\eta}-\beta\gamma\right)(p_n, |z_n|^{2\kappa}\sz_n)\right|&\leq\frac{(2\kappa+1)\eta}{6}\||z_n|^{\kappa}\nabla z_n\|^2+C\|z_n\|^{2\kappa}_{L^{2(\kappa+1)}}\|p_n\|^2, \\
    |(f_n, |z_n|^{2\kappa}z_n)|&\leq\frac{(2\kappa+1)\eta}{6}\||z_n|^{\kappa}\nabla z_n\|^2+C\|z_n\|^{2\kappa}_{L^{2(\kappa+1)}}\|f_n\|^2.
  \end{align*}
   Utilizing these estimates in \eqref{IP1}, we obtain
   \begin{equation}\label{IP2}
  \begin{aligned}
    &\frac{1}{2(\kappa+1)}\frac{d}{dt}\|z_n(t)\|^{2(\kappa+1)}_{L^{2(\kappa+1)}}+\frac{\eta(2\kappa+1)}{2}\||z_n(t)|^\kappa\nabla\sz_n(t)\|^2+\beta\gamma \|z_n(t)\|^{2(\kappa+1)}_{L^{2(\kappa+1)}}    \\
    & \qquad + (2\kappa+1)\Big( (K *\nabla z_n)(t) , |z_n(t)|^{2\kappa} \nabla z_n(t)\Big)  \\
    & \quad\leq C\big(\|p_n(t)\|^2+\|f_n(t)\|^2\big) \|\sz_n(t)\|^{2\kappa}_{L^{2(\kappa+1)}}.
  \end{aligned}
  \end{equation}
  Define $\mathcal{Z}(t)=\|z_n(t)\|^{2(\kappa+1)}_{L^{2(\kappa+1)}}$.  Note that, an application of \cite[Lemma A.2]{MTMSSS} leads to
  \begin{align*}
  &	\int_0^t \Big( (K *\nabla z_n)(\tau) , |z_n(\tau)|^{2\kappa} \nabla z_n(\tau)\Big) d\tau \nonumber\\&= \int_0^t \Big( |z_n(\tau)|^{2\kappa}(K *\nabla z_n)(\tau) ,  \nabla z_n(\tau)\Big) d\tau\nonumber\\&=
  	\int_0^t\int_{\Omega}|z_n(\tau,x)|^{2\kappa}\int_0^{\tau}K(\tau-s)\nabla z_n(s,x)ds\cdot\nabla z_n(\tau,x)dxd\tau\nonumber\\&=\int_{\Omega}	\int_0^t|z_n(\tau,x)|^{2\kappa}\int_0^{\tau}K(\tau-s)\nabla z_n(s,x)\cdot\nabla z_n(\tau,x)dsd\tau dx\ge 0.
  \end{align*} 
The change in the order of integration is justified by the equivalence of all norms in finite-dimensional spaces; see equation \eqref{eqn-ener-est} for an example of such a finite energy estimate.    Then, integrating \eqref{IP2} from $0$ to $t,$ and using above displayed inequality, one obtains the following inequality:
  \begin{align*}
      \mathcal{Z}(t)\leq\mathcal{Z}(0)+C\int_0^t\big(\|p_n(t)\|^2+\|f_n(s)\|^2\big)\mathcal{Z}(s)^\frac{2\kappa}{2(\kappa+1)} \, ds.
  \end{align*}
  Since $0<\frac{2\kappa}{2(\kappa+1)}<1,$ using a nonlinear generalization of Gronwall's inequality [see Theorem \ref{lem:NonlinGronwall}], we have 
  \begin{align*}
      \mathcal{Z}(t)\leq \Big\{\mathcal{Z}(0)^\frac{1}{(\kappa+1)}+\frac{1}{(\kappa+1)}\int_0^t\big(2\|p_n(t)\|^2+\|f_n(s)\|^2\big) \, ds\Big\}^{(\kappa+1)},
  \end{align*}
  that is, 
  \begin{align}\label{IP3}
\|z_n(t)\|^2_{L^{2(\kappa+1)}}&\leq \|w_0\|^2_{L^{2(\kappa+1)}}+ \frac{1}{(\kappa+1)}\Big(\|u\|^2_{H^{\frac{1}{2},\frac{1}{4}}(\Sigma_1)}+\int_0^t\|f(s)\|^2 \, ds\Big)\no\\&\leq \|w_0\|^2_{L^{2(\kappa+1)}} +C\left(\|u\|^2_{H^{\frac{1}{2},\frac{1}{4}}(\Sigma_1)}+\|f\|^2_{L^2(0, \infty, L^2)}\right).
  \end{align}
  Using \eqref{IP3} in \eqref{IP2}, we deduce that 
  \begin{align}\label{IP4}
&\|z_n(t)\|^{2(\kappa+1)}_{L^{2(\kappa+1)}}+\frac{\eta(2\kappa+1)(\kappa+1)}{2}\int_0^t\||z_n(t)|^\kappa\nabla z_n(t)\|^2 dt \no\\& \qquad\leq C\bigg(\|w_0\|^{2(\kappa+1)}_{L^{2(\kappa+1)}}+\|u\|^{2(\kappa+1)}_{H^{\frac{1}{2},\frac{1}{4}}(\Sigma_1)}+\|f\|^{2(\kappa+1)}_{L^2(0,\infty;L^2)}\bigg),
  \end{align}
  for all $0\leq t<\infty.$ Thus, we have 
  \begin{align*}
  	&\sup_{t\ge 0}\|z_n(t)\|^{2(\kappa+1)}_{L^{2(\kappa+1)}}+\frac{\eta(2\kappa+1)(\kappa+1)}{2}\int_0^\infty\||z_n(t)|^\kappa\nabla z_n(t)\|^2 dt \no\\& \qquad\leq C\bigg(\|w_0\|^{2(\kappa+1)}_{L^{2(\kappa+1)}}+\|u\|^{2(\kappa+1)}_{H^{\frac{1}{2},\frac{1}{4}}(\Sigma_1)}+\|f\|^{2(\kappa+1)}_{L^2(0,\infty;L^2)}\bigg).
  \end{align*}
   Now, using the Sobolev embedding, we estimate the following:
  \begin{align}\label{reg_l_est}
      \|z_n\|^{2(\kappa+1)}_{L^{2(\kappa+1)}(0,\infty;L^{6(\kappa+1)})}&=\int_0^\infty\|z_n(t)\|^{2(\kappa+1)}_{L^{6(\kappa+1)}} dt=\int_0^\infty\|z_n(t)^{\kappa+1}\|^2_{L^6} dt\no\\ &\leq C(\kappa+1)^2\int_0^\infty\|z_n(t)^\kappa\nabla z_n(t)\|^2 \, dt\no\\&\leq C(\kappa+1)^2 \bigg(\|w_0\|^{2(\kappa+1)}_{L^{2(\kappa+1)}}+\|u\|^2_{H^{\frac{1}{2},\frac{1}{4}}(\Sigma_1)}+\|f\|^{2(\kappa+1)}_{L^2(0,\infty;L^2)}\bigg),
  \end{align}
  where in the last inequality, we have used the estimate \eqref{IP4}.  From \eqref{reg_unif_est} and \eqref{reg_time_est}, we can invoke the Banach Alaoglu theorem to extract a subsequence still denoted by $\zb_n$ such that
\begin{equation}\label{weak_lim}
\left\{ 
\begin{aligned}
\zb_n\rightarrow \zb \text{ weak* in } L^\infty(0, \infty; H^1(\Omega)),\\
\zb_n\rightarrow \zb \text{ weak in } L^2(0, \infty; H^2(\Omega)),\\
\zb'_n\rightarrow \zb' \text{ weak in } L^2(0, \infty; L^2(\Omega)),
\end{aligned}
\right.
\end{equation}
as $n\to\infty.$
Utilizing the compact embedding 
$H^2(\Omega) \subset
H^1(\Omega)$ and applying the Aubin-Lions compactness lemma \cite[Theorem 5]{JSimon}, we obtain the following strong convergence result:
\begin{align}\label{st_lim}
    \zb_n\rightarrow\zb \text{ strong in } L^2(0, \infty; H^1(\Omega)).
\end{align} 
The strong convergence further implies there exist a subsequence $(\zb_{n_j})$ such that  $\zb_{n_j}(t)\to\zb(t)$ for a.e. $t\in [0, \infty)$ in $L^2(\Omega)$ and $\zb_{n_j}(x,t)\to\zb(x,t)$ for a.e. $(x, t)\in Q.$ Now we can pass to the limit in \eqref{equ:gal1} and find $$z \in L^\infty(0,\infty; H^1(\Omega))\cap L^2(0,\infty; H^2(\Omega))\cap L^{2(\kappa+1)}(0, \infty; L^{6(\kappa+1)}(\Omega))\cap H^1(0,\infty; L^2(\Omega)),$$ as a solution of \eqref{equ: elift_lin_model},  similarly as  in \cite[Theorem 2.1]{CM06}. Using \eqref{weak_p}, it is easy to see that $$w \in L^\infty(0,\infty; H^1(\Omega))\cap L^2(0,\infty; H^2(\Omega))\cap L^{2(\kappa+1)}(0, \infty; L^{6(\kappa+1)}(\Omega))\cap H^1(0,\infty; L^2(\Omega)),$$ and satisfies \eqref{eqn:lmodel}.

To derive the estimate \eqref{reg_est} for $w$, we first obtain the corresponding estimate for $z$, from which the desired bound for $w = z + p$ follows directly by applying the triangle inequality. To this end, we combine the inequalities \eqref{reg_unif_est}, \eqref{reg_time_est}, and \eqref{reg_l_est} to obtain
\begin{align*}
 & \|z_n\|_{L^\infty(0,\infty; H^1)}+\|z_n\|_{L^2(0,\infty;H^2)}+\|z_n\|_{H^1(0,\infty; L^2)}+\|z_n\|_{L^{2(\kappa+1)}(0, \infty; L^{6(\kappa+1)})} \no\\ &\qquad
      \leq C(\|w_0\|_{H^1\cap L^{2(\kappa+1)}}+\|u\|_{H^{\frac{1}{2}, \frac{1}{4}}(\Sigma_1)}+\|f\|_{L^2(0,\infty;L^2)}).
\end{align*}
Note that the right-hand side of the above inequality is independent of $n$. Taking $\displaystyle\liminf_{n \to \infty}$ on both sides and using the weak lower semicontinuity of norms, we obtain the estimate \eqref{reg_est} for $z$. This concludes the proof.
\end{proof} 

\section{Stabilization of Linearized System} \label{sec:SLS} 
In this section, we investigate the stabilizability of the system \eqref{eqn:lmodel} by employing a boundary control $u$ in the feedback form, as specified in \eqref{fcontrol}. To begin, we observe that when $f \equiv 0$ and $u \equiv 0$, the solution $w$ to \eqref{eqn:lmodel} exhibits exponential decay to zero for every initial condition $w_0 \in L^2$, with a fixed decay rate. Our goal is to design a boundary feedback controller for \eqref{eqn:lmodel} such that the resulting solution $w$ decays exponentially to zero at an arbitrary prescribed rate $\omega \in (0, \omega_0-\epsilon),$ where
\begin{align} \label{eqdef:omega_0}
	\omega_0 : = 2 \eta \lambda_1 +\beta\gamma + \frac{\lambda_1^2 \eta^2}{k + \eta (\lambda_{N_\omega}-\lambda_1)},
\end{align}
and here $\epsilon>0$ could be any arbitrary small number. Furthermore, we require that the same controller also ensures stabilization of the nonlinear system \eqref{eqn:nlmodel}. To this end, we introduce the transformations $\tilde{w}(t) := e^{\omega t} w(t)$ and $\tilde{u}(t) := e^{\omega t} \bar{u}(t)$ in \eqref{eqn:lmodel}. Under this change of variables, the pair $(\tilde{w}, \tilde{u})$ satisfies the following modified system:
\begin{equation}  \label{eqn:shift_lmodel}
\left\{
\begin{aligned}
& \tx_t=\eta\Delta\tx+(\omega-\beta\gamma)\tx+\int_0^t e^{-\delta(t-s)}\Delta\tx(s) \, ds  ,\quad \text{ in } Q ,\\
&   \tx=\tu, \quad  \text{ on } \Sigma_1 , \quad  \frac{\partial \tx}{\partial n}= 0, \quad  \text{ on } \Sigma_2,\\&\tx(x,0)=w_0, \, \text{ for all } x\in \Omega.
\end{aligned}
\right.
\end{equation}
 Note that, if the control $\tu$ stabilizes the system \eqref{eqn:shift_lmodel} exponentially then the control $u$ exponentially stabilizes the system \eqref{eqn:lmodel} with decay rate $\omega$. So it is enough to discuss the stabilization of \eqref{eqn:shift_lmodel} with a control $\tu.$
To this end, we introduce the Dirichlet map $D$ as follows: given $\tu$ in a suitable trace space, we denote by $D\tu:=\psi,$ the solution of

 \begin{equation}\label{eqn:lift}
\left\{
\begin{aligned}
&-\eta\Delta \psi+k\psi=0, \quad \text{ in }\Omega,\\
&\psi = \tu, \text{ on }\Gamma_1, \, \frac{\partial \psi}{\partial n} =0, \text{ on }\Gamma_2,
\end{aligned}
\right.
\end{equation} 
where $k>0$ be any fixed positive constant. Now, for any $\omega$ with $\omega+\epsilon<\omega_0,$ where $\omega_0$ is as in \eqref{eqdef:omega_0}, we note that 
\begin{align*}
	& -(\lambda_j\eta+\beta\gamma-\omega)(k+\lambda_j\eta)-(\omega+k-\beta\gamma)\lambda_1\eta  = (\omega -\eta \lambda_j -\eta \lambda_1 -\beta\gamma) (k+\eta\lambda_j -\eta \lambda_1) -\eta^2\lambda_1^2, 
\end{align*}
and therefore for all $j=1,2,\ldots, N_\omega,$
\begin{equation} \label{(c)}
\begin{aligned} 
 & \frac{-(\lambda_j\eta+\beta\gamma-\omega)(k+\lambda_j\eta)-(\omega+k-\beta\gamma)\lambda_1\eta}{k+(\lambda_j-\lambda_1)\eta} +\epsilon \\
 & \qquad = (\omega +\epsilon -\eta \lambda_j -\eta \lambda_1 -\beta\gamma) - \frac{\eta^2\lambda_1^2}{k +\eta (\lambda_j-\lambda_1)} \\
 & \qquad \le \omega+\epsilon - 2\eta\lambda_1 -\beta \gamma - \frac{\eta^2\lambda_1^2}{k +\eta (\lambda_{N_\omega}-\lambda_1)} = \omega +\epsilon -\omega_0 <0.
\end{aligned}
\end{equation}
In the last inequality , we have used \eqref{eqdef:omega_0}. Also, we have
\begin{align} \label{(d)}
 \frac{-\lambda_j(k+\lambda_j\eta)-k\lambda_1\eta}{k+(\lambda_j-\lambda_1)\eta}<0, \text{ for } j=1,2,\ldots, N_\omega .
\end{align}

The Dirichle map is well defined and $D\in\mathcal{L}(H^\frac{1}{2}(\Gamma_1), H^1(\Omega)).$ Set $\sz=\tx-D\tu$. Note that for later purposes, it is convenient to define a feedback control $\tu$ (see \eqref{fcontrol}) in terms of $\sz$. Similar to  \cite{BV13}, we choose a feedback control as
 \begin{align}\label{fcontrol1}
\tu=\lambda_1\sum_{j=1}^{N_\omega} ( \sz,\varphi_j)\Phi_j.
 \end{align}
Indeed, substitution of $\sz=\tx-D\tu$ in \eqref{fcontrol1} yields
\begin{align}\label{fcontrol2}
\tu=\lambda_1\sum_{j=1}^{N_\omega}(\tx,\varphi_j)\Phi_j-\lambda_1\sum_{j=1}^{N_\omega}(\tu, D^\ast\varphi_j)_{\Gamma_1}\Phi_j,
\end{align}
where $\lambda_1$ is the first eigenvalue, $\varphi_j$ eigenfunctions as discussed in \eqref{eqevlaplace}, $\Phi_j$ are as defined in \eqref{eqdef:Phi_j}, $D^\ast$ is the adjoint of $D$, $D^\ast\in \mathcal{L}(H^1(\Omega);H^\frac{1}{2}(\Gamma_1))$, and $(\cdot, \cdot)_{\Gamma_1}$ denotes the inner product on $\Gamma_1$.  Taking the inner product in \eqref{eqn:lift} with $\varphi_i$ by replacing $\tu=\Phi_j$ in \eqref{eqn:lift}, where $\psi=D\Phi_j$ (recall $\varphi_i, \lambda_i$  from \eqref{eqevlaplace})  and using Green's formula, we obtain 
\begin{align*}
&\eta\int_\Omega\nabla (D\pp_j)\cdot\nabla\vp_i-\eta\int_{\Gamma}(\nabla (D\pp_j)\cdot n)\vp_i+k\int_\Omega D\pp_j\vp_i=0,\no\\
&\implies -\eta\int_\Omega D\pp_j\Delta\vp_i+\eta\int_{\Gamma}\frac{\partial\vp_i}{\partial n}D\pp_j+k\int_\Omega D\pp_j\vp_i=0,\no\\
&\implies  (k+\eta\lambda_i)\int_\Omega D\pp_j\vp_i=-\eta\int_{\Gamma_1}\frac{\partial\vp_i}{\partial n}\pp_j \, \text{ for } i, j=1, 2, \ldots, {N_\omega},
\end{align*}
so that
\begin{align}\label{exp1}
( D^\ast\vp_i, \pp_j)_{\Gamma_1}=\int_\Omega D\pp_j\vp_i=-\frac{\eta}{k+\eta\lambda_i}\int_{\Gamma_1}\frac{\partial\vp_i}{\partial n}\pp_j =-\frac{\eta}{k+\eta\lambda_i}\delta_{ij}.
\end{align}
Thus, \eqref{fcontrol2} yields
\begin{align*}
(\tu, D^\ast\vp_i)_{\Gamma_1} &=\Big( \lambda_1\sum_{j=1}^{N_\omega}(\tx,\varphi_j)\Phi_j-\lambda_1\sum_{j=1}^{N_\omega}(\tu, D^\ast\varphi_j)_{\Gamma_1}\Phi_j, D^\ast\vp_i\Big)_{\Gamma_1}\no\\
& = \lambda_1\sum_{j=1}^{N_\omega}(\tx,\varphi_j)(\Phi_j, D^\ast\vp_i)_{\Gamma_1}-\lambda_1\sum_{j=1}^{N_\omega}(\tu, D^\ast\varphi_j)_{\Gamma_1}\langle\Phi_j, D^\ast\vp_i)_{\Gamma_1}.
\end{align*}
In particular, choosing $j=i,$ one can deduce
\begin{align*}
&\sum_{i=1}^{N_\omega}\left(1-\frac{\lambda_1\eta}{k+\lambda_i\eta}\right)(\tu, D^\ast\vp_i)_{\Gamma_1}= -\sum_{i=1}^{N_\omega}\frac{\lambda_1\eta}{k+\lambda_i\eta}(\tx,\varphi_i),
\end{align*}
that is,
\begin{align*}
&(\tu, D^\ast\vp_i)_{\Gamma_1}=-\frac{\lambda_1\eta}{k+\lambda_i\eta-\lambda_1\eta}(\tx,\varphi_i).
\end{align*}
Substituting the above expression in \eqref{fcontrol2}, we arrive at \eqref{fcontrol}.

Now, using $\sz=\tx-D\tu$ in \eqref{eqn:lmodel} with the control $\tu$ defined in \eqref{fcontrol1}, we get
\begin{equation}\label{equ: lift_lin_model}
\left\{ 
\begin{aligned}
&\sz_t = \eta\Delta\sz-(\omega-\beta\gamma)\sz +(\omega+k-\beta\gamma) D\tu -(D\tu)_t+\int_0^t e^{-\delta(t-s)}\Delta\sz(s) \, ds \\ &\qquad+\frac{k}{\eta}\int_0^t e^{-\delta(t-s)}D\tu(s)\, ds  , \text{ in } Q,\\
&\sz=0, \text{ on } \Sigma_1, \, \frac{\partial\sz}{\partial n}=0, \text{ on }\Sigma_2,\\
&\sz(x,0)=w_0, \text{ in }\Omega.
\end{aligned}\right.
\end{equation} 
We now establish the main result of this section, which asserts that the system \eqref{eqn:shift_lmodel} is well-posed and that the control $\tu$ defined in \eqref{fcontrol1} ensures exponential stabilization of the system, provided that $N_\omega$ and $k$ are appropriately chosen. Equivalently, this amounts to proving that the system \eqref{equ: lift_lin_model} is both well-posed and exponentially stable under the same control $\tu$ from \eqref{fcontrol1}.

\begin{Theorem}
For any $\epsilon > 0$, let $\omega \in (0, \omega_0 - \epsilon)$ (with $\omega_0$ defined in \eqref{eqdef:omega_0}), $N_\omega \in \mathbb{N}$ as given in Remark \ref{rem:eig-N}, $k$ satisfying conditions \eqref{(c)} and \eqref{(d)}, and consider $w_0 \in L^2(\Omega)$.
Then, under the assumption ($\mathbf{A1}$), the feedback controller in \eqref{fcontrol1} exponentially stabilizes \eqref{equ: lift_lin_model} in $L^2(\Omega).$ 
\end{Theorem}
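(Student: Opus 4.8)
The plan is to exploit the spectral decomposition together with the way the feedback \eqref{fcontrol1} has been engineered. Writing $c_j(t) = (\sz(t),\varphi_j)$ and projecting the closed-loop system \eqref{equ: lift_lin_model} onto $\varphi_j$, the identity \eqref{exp1} gives $(D\tu,\varphi_j) = -\frac{\lambda_1\eta}{k+\eta\lambda_j}\,c_j$ for every $j\le N_\omega$, so the first $N_\omega$ coefficients satisfy a system that is \emph{closed and decoupled mode-by-mode}: it does not see the higher modes. This is the crucial structural point and is precisely why the controller was expressed through $\sz$ and the lifting $D$. Well-posedness of the closed loop then follows by first solving this finite-dimensional subsystem, which renders $\tu$, $D\tu$ and $(D\tu)_t$ explicit, and then reading \eqref{equ: lift_lin_model} as a linear memory--parabolic equation with homogeneous boundary data and a known forcing, to which Theorem \ref{existence_weak} applies; consistency of the two constructions is secured by the uniqueness statement of that theorem.

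For each $j\le N_\omega$ I would introduce the auxiliary variable $\rho_j(t) = \int_0^t e^{-\delta(t-s)}c_j(s)\,ds$, which solves $\dot\rho_j = c_j - \delta\rho_j$ with $\rho_j(0)=0$. Since $((D\tu)_t,\varphi_j) = -\frac{\lambda_1\eta}{k+\eta\lambda_j}\dot c_j$, this term appears on both sides and contributes the factor $1-\frac{\lambda_1\eta}{k+\eta\lambda_j} = \frac{k+\eta(\lambda_j-\lambda_1)}{k+\eta\lambda_j}>0$; dividing it out turns the projected equation into the planar linear system $\dot c_j = A_j c_j + B_j\rho_j$, $\dot\rho_j = c_j - \delta\rho_j$, where $A_j$ is exactly the left-hand quantity bounded in \eqref{(c)} and $B_j$ the quantity in \eqref{(d)}. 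By \eqref{(c)} we have $A_j<-\epsilon<0$ and by \eqref{(d)} we have $B_j<0$, so the coefficient matrix has trace $A_j-\delta<0$ and determinant $-\delta A_j - B_j>0$; hence both eigenvalues have strictly negative real part and each $(c_j,\rho_j)$ decays exponentially, uniformly in the data $c_j(0)$ determined by $w_0$. Consequently $\tu$, $D\tu$ and $(D\tu)_t$, being finite linear combinations of the $c_j$ and $\dot c_j$, decay exponentially as well.

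For the high-frequency part $\sz_\perp := \sz - \sum_{j\le N_\omega} c_j\varphi_j$, I would test \eqref{equ: lift_lin_model} against $\sz_\perp$. The diffusion term yields $-\eta\|\nabla\sz_\perp\|^2 \le -\eta\lambda_{N_\omega+1}\|\sz_\perp\|^2$, which combined with the shift produces, via Remark \ref{rem:eig-N}, a strictly negative reaction coefficient on this subspace; the terms carrying $D\tu$, $(D\tu)_t$ and the two convolutions are treated as forcing that already decays exponentially by the previous paragraph, the self-convolution of $\sz_\perp$ being absorbed through Young's inequality, the boundedness of the kernel $e^{-\delta\cdot}$, and the spectral gap $\lambda_{N_\omega+1}$. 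The general Gronwall inequality (Lemma \ref{lem:Gronwall}) then gives exponential decay of $\|\sz_\perp(t)\|$. Adding the two contributions, $\|\sz(t)\|_{L^2}^2 = \sum_{j\le N_\omega}|c_j(t)|^2 + \|\sz_\perp(t)\|^2 \le C e^{-2\epsilon' t}\|w_0\|^2$ for some $\epsilon'>0$, which is the asserted exponential stabilization of \eqref{equ: lift_lin_model}.

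The main obstacle is the simultaneous presence of the memory convolution and the implicit derivative term $(D\tu)_t$: the convolution destroys the manifest dissipativity a pure parabolic equation would enjoy, and $(D\tu)_t$ makes the closed loop formally implicit in $\sz_t$. The device resolving both at the low-mode level is the passage to the planar $(c_j,\rho_j)$ system, which converts each scalar Volterra integro-differential equation into an autonomous $2\times 2$ ODE whose stability is decided by the elementary trace--determinant test; it is here that the design inequalities \eqref{(c)} and \eqref{(d)} (hence the choice of $\omega_0$ in \eqref{eqdef:omega_0} and of $k$) enter decisively. A secondary technical point is to verify that the memory contribution in the high-mode energy estimate does not overwhelm the spectral-gap dissipation, which requires tracking the size of the convolution against $\eta\lambda_{N_\omega+1}$ together with the exponential decay of the kernel.
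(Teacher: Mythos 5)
Your proposal is correct in substance and follows the same skeleton as the paper: split $\sz$ into the span of $\{\varphi_1,\dots,\varphi_{N_\omega}\}$ and its complement, use \eqref{exp1} to show that the feedback closes the first $N_\omega$ modes into a decoupled system with exactly the coefficients $A_j$, $B_j$ controlled by \eqref{(c)} and \eqref{(d)}, and then treat the high modes with the exponentially decaying forcing $S(\tu)$. Where you genuinely diverge is in how the resulting scalar Volterra equations are shown to be exponentially stable. The paper stays with the integro-differential form \eqref{ffinite} and invokes the frequency-domain criterion of \cite{VB72}: positivity of the kernel gives $\Re\hat K(\xi)>0$ on a half-plane, whence $\xi-\mathcal A-\hat{\mathcal B}(\xi)$ is invertible there and exponential decay follows from the resolvent/Paley--Wiener theorem, with a separate appeal to \cite{AR04} to get decay of $\dot X_{N_\omega}$ and hence of $S(\tu)$. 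You instead exploit the fact that the kernel is a single exponential and augment the state with $\rho_j=K\ast c_j$, turning each mode into an autonomous planar ODE whose Hurwitz property is read off from trace $A_j-\delta<0$ and determinant $-\delta A_j-B_j>0$. This is more elementary and self-contained, and it delivers the decay of $\dot c_j$ (hence of $(D\tu)_t$ and $S(\tu)$) for free, without the extra citation. The trade-off is generality: the paper's route survives for kernels that are merely positive-definite, while yours is tied to the finite-dimensional realizability of $e^{-\delta t}$.

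One soft spot: for the high-frequency part you propose an aggregate energy estimate and claim the memory term is ``absorbed through Young's inequality, the boundedness of the kernel, and the spectral gap.'' As stated this does not close: testing the convolution against $\sz_\perp$ produces, after integration by parts, a term of size $\|\nabla\sz_\perp(t)\|\int_0^t e^{-\delta(t-s)}\|\nabla\sz_\perp(s)\|\,ds$, which Young's inequality bounds only by quantities comparable to the dissipation $\eta\|\nabla\sz_\perp(t)\|^2$ itself, with a constant that need not be small; the spectral gap does not help because the convolution also carries a factor $\lambda_j$ mode by mode. You need either the positivity of the (exponentially shifted) kernel, as the paper uses in the proof of Theorem \ref{reg_sol} via \cite[Lemma A.2]{MTMSSS}, or — more in the spirit of your own argument — simply run the same planar $(z_j,\rho_j)$ reduction on each mode $j>N_\omega$ of \eqref{infinte}, where the slower eigenvalue of the $2\times2$ matrix stays bounded away from the imaginary axis uniformly in $j$ (it tends to $-\delta-1/\eta$ as $\lambda_j\to\infty$), which is essentially what the paper's ``proceeding analogously'' amounts to. With that repair the argument is complete.
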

\begin{proof}
Let $X^1=\mathrm{lin\ span} \{\vp_i\}_{i=1}^{N_\omega}$, and $P_{N_\omega}$ be the algebric projection of $L^2(\Omega)$ on $X^1$. We set $\sz^1=P_{N_\omega}\sz,$ $ \sz^2=(I-P_{N_\omega})\sz$. Then, we can write $\sz=\sz^1+\sz^2$. Similarly, we can write the initial data $w_0$ as $w_0=P_{N_\omega}w_0+(I-P_{N_\omega})w_0.$ If we represent $\displaystyle \sz^1=\sum_{i=1}^{N_\omega}z_i(t)\vp_i(x)$ and $P_{N_\omega}w_0=\sum_{i=1}^{N_\omega}z_{0i}\vp_i(x)$,
then we can rewrite \eqref{equ: lift_lin_model} as a finite system of decoupled integro-differential equations of the form
\begin{align}\label{finite part0}
    \sum_{i=1}^{N_\omega} z'_i(t)\vp_i&=-\sum_{i=1}^{N_\omega} (\eta\lambda_i+\beta\gamma-\omega)z_i(t)\vp_i+(\omega+k-\beta\gamma)\lambda_1\sum_{j=1}^{N_\omega}\bigg(\sum_{i=1}^{N_\omega} z_i(t)\vp_i, \vp_j\bigg) D\pp_j\no\\& \quad-\lambda_1\sum_{j=1}^{N_\omega}\bigg(\sum_{i=1}^{N_\omega} z'_i(t)\vp_i, \vp_j\bigg) D\pp_j - \int_0^t e^{-\delta(t-s)}\sum_{i=1}^{N_\omega} \lambda_i z_i(s)\vp_i \, ds\no\\& \quad+\frac{k}{\eta}\int_0^t e^{-\delta(t-s)}\lambda_1\sum_{j=1}^{N_\omega}\bigg(\sum_{i=1}^{N_\omega} z_i(s)\vp_i, \vp_j\bigg) D\pp_j.
\end{align}
Now taking inner product with  $\vp_i$, $1\leq i\leq N_\omega$ in \eqref{finite part0}, we find 
\begin{align*}
 \bigg(1- \frac{\lambda_1\eta}{k+\lambda_i\eta}\bigg) z'_i(t)&=\bigg(\omega-\eta\lambda_i-\beta\gamma-\frac{(\omega+k-\beta\gamma)\lambda_1\eta}{k+\eta\lambda_i}\bigg) z_i(t)\no\\&\quad-\bigg(\lambda_i+\frac{k\lambda_1}{k+\lambda_i\eta}\bigg)\int_0^t e^{-\delta(t-s)} z_i(s) \, ds,
\end{align*}
so that 
\begin{align}\label{finite part1}
  z'_i(t)&= \frac{-(\eta\lambda_i+\beta\gamma-\omega)(k+\eta\lambda_i)-(\omega+k-\beta\gamma)\lambda_1\eta}{k+\eta\lambda_i-\lambda_1\eta} z_i(t) \no\\&\quad+ \frac{-\lambda_i(k+\eta\lambda_i)-k\lambda_1}{k+\eta\lambda_i-\lambda_1\eta}\int_0^t e^{-\delta(t-s)} z_i(s) \, ds.
\end{align}
Therefore, the following systems are obtained:
\begin{equation}\label{ffinite}
    \left\{
    \begin{aligned}
       & z'_i(t)= \frac{-(\eta\lambda_i+\beta\gamma-\omega)(k+\eta\lambda_i)-(\omega+k-\beta\gamma)\lambda_1\eta}{k+\eta\lambda_i-\lambda_1\eta} z_i(t)\\ & \qquad \quad - \frac{\lambda_i(k+\eta\lambda_i)+k\lambda_1}{k+\eta\lambda_i-\lambda_1\eta}\int_0^t e^{-\delta(t-s)} z_i(s) \, ds, \, t>0, \\
       & z_i(0)=z_{0i}, \quad\text{ for } i=1, 2, \cdots, {N_\omega},
    \end{aligned}
    \right.
\end{equation}
and
\begin{equation}\label{infinte}
    \left\{
    \begin{aligned}
& z'_i(t)= -(\eta\lambda_i+\beta\gamma-\omega) z_i(t)-\lambda_i\int_0^t e^{-\delta(t-s)} z_i(s) \, ds +(S(\tu)(t), \vp_i), \quad t>0,\\
& z_i(0)=(I-P_{N_\omega})w_0, \quad\text{ for } i={N_\omega}+1, {N_\omega}+2,\cdots,
   \end{aligned}
    \right.
\end{equation}
where $S(\tu)$ is given  by
\begin{align}\label{def_S}S(\tu)=(\omega+k-\beta\gamma)D\tu-(D\tu)_t+\frac{k}{\eta}\int_0^t e^{-\delta(t-s)} D\ub(s) \, ds.\end{align}
From $\tu$ given in \eqref{fcontrol1}, we can estimate $S(\tu)$ as given in  \cite[equation 3.12]{IM15} and get
\begin{align}\label{bdd_S}
  \|S(\tu)(t)\|^2 \leq C\bigg( \left\Vert \frac{d\sz^1(t)}{dt} \right\Vert^2+\|\sz^1(t)\|^2\bigg),  
\end{align}
for some positive constant $C$.
Let us denote 
$$A_i:= \frac{-(\eta\lambda_i+\beta\gamma-\omega)(k+\eta\lambda_i)-(\omega+k-\beta\gamma)\lambda_1\eta}{k+\eta\lambda_i-\lambda_1\eta},
\, \text{ for }\  i= 1, 2, \cdots, N_\omega$$ and $$B_i:=\frac{-\lambda_i(k+\eta\lambda_i)-k\lambda_1}{k+\eta\lambda_i-\lambda_1\eta}, \  \text{ for }\ i= 1, 2, \cdots, {N_\omega}.$$
Then \eqref{(c)} and  \eqref{(d)} yield
\begin{align}\label{con1}
  A_i+\epsilon < 0, \quad B_i < 0,\ \text{ for }\ i=1,2, \cdots, {N_\omega}.  
\end{align}
$$$$
With the setting
\begin{align*}
\mbox{$X_{N_\omega}= \begin{pmatrix} z_1\\ z_2\\ \vdots \\ z_{N_\omega}\end{pmatrix}$, $\mathcal{A}=\begin{pmatrix} A_1 & 0 & \cdots &0\\ 0 & A_2 &\cdots &0\\ \vdots & \vdots& \ddots&\vdots\\
0 &0&\cdots& A_{N_\omega}\end{pmatrix}$, $\mathcal{B}(t)= e^{-\delta t}\begin{pmatrix}B_1&0&\cdots&0\\0&B_2&\cdots&0\\ \vdots&\vdots&\ddots&\vdots\\0&0&\cdots&B_{N_\omega} \end{pmatrix}$}
\end{align*}
we can write the system \eqref{ffinite} as
\begin{align}
    X_{N_\omega}'(t)=\mathcal{A}X_{N_\omega}(t)+\int_0^t\mathcal{B}(t-s)X_{N_\omega}(s) \, ds.
\end{align}
This is a linear integro-differential equation whose exponential stability has been extensively studied; in particular, we shall refer to \cite{VB72}. More precisely, since our kernel $K(t)=e^{-\delta t}$ is positive, we have (see \cite[Chapter IV, Lemma 4.1]{VB76}) $$\Re(\hat{K}(\xi))>0, \ \text{ for all } \ \xi \in \mathbb{C} \ \text{ with }\ \Re(\xi)>-\epsilon, $$
where $\hat{K}$ denotes the Laplace transformation of $K$ and $\Re$ represents the real part.  More precisely, we can choose $\epsilon=\delta$ since $\hat{K}(\xi)=\frac{1}{\delta+\xi}$. Thus, for all $i=1,2,\ldots, {N_\omega},$ and for all $\xi \in \mathbb{C}$ with $\Re(\xi)>-\epsilon,$ from \eqref{con1}, we obtain $$\Re[\xi-A_i-B_i\hat{K}(\xi)]>-\epsilon-A_i-B_i\hat{K}(\xi)>0,$$ and this further implies $(\xi-\mathcal{A}-\hat{\mathcal{B}}(\xi))$  is invertible for all $\xi \in \mathbb{C}$ with $\Re(\xi)>-\epsilon$. Then, relying on the result in  \cite[Corollary 3.3]{VB72}, we found that $X_{N_\omega}$ is exponentially stable, that is, there exists a $\alpha_1>0$ such that
\begin{align}\label{exp_lin}
    \|X_{N_\omega}(t)\|\leq Ce^{-\alpha_1 t}\|X_{N_\omega}(0)\|, \ \text{ that is, } \  \|X_{N_\omega}(t)\|\leq Ce^{-\alpha_1 t}\|w(0)\|.
\end{align}
 Furthermore, by \cite[Corollary 3.3]{VB72}, we know that $X_{N_\omega}$ is integrable over $[0, \infty)$. Then, invoking \cite[Proposition 3.1]{AR04}, we deduce that the time derivative of $X_{N_\omega}$ also exhibits exponential decay. Consequently, the exponential stability of both $\sz^1$ and $\frac{d}{dt}\sz^1$ is established. Therefore, by virtue of \eqref{bdd_S}, we conclude that $S(\tu)$ also decays exponentially.

Let us now consider the system \eqref{infinte}. Using Remark \ref{rem:eig-N} and the exponential decay of $S(\tu)$ in \eqref{infinte}, and proceeding analogously as above, we get
\begin{align}\label{exp_nlim}
    \|z_j(t)\|\leq Ce^{-\alpha_2 t}\|z_j(0)\|,
\end{align}
for some $C>0$, and $\alpha_2>0$ independent of $j={N_\omega}+1, {N_\omega}+2, \ldots.$ A combination of \eqref{exp_lin} and \eqref{exp_nlim} lead to 
\begin{align*}
   \|\sz(t)\|\leq Ce^{-\alpha t}\|w(0)\|,
\end{align*}
for some $C>0$ and $\alpha>0.$
\end{proof}
\section{Stabilization of Nonlinear System}\label{sec:SNlS}
In this section, we investigate the stabilization of the nonlinear system \eqref{eqn:modelled} around the steady state $\y_\infty$. Recall that, in the previous section, we introduced the transformed variables $\tx := e^{\omega t} w$ and $\tu := e^{\omega t} u$ to analyze the exponential stability of the linearized system \eqref{eqn:shift_lmodel}, which yielded a decay rate of $\omega + \alpha$, for some $\alpha > 0$. In contrast, as we turn to the nonlinear setting, we shall observe a reduction in the decay rate. Consequently, we must restrict $\omega$ to satisfy
$
\omega < \min\{\delta, \omega_0 - \epsilon\},
$
where $\omega_0$ is defined in \eqref{eqdef:omega_0} and $\epsilon > 0$ is any small number.

  For any $\omega \in (0, \min\{\delta, \omega_0-\epsilon\}),$  set $\tx:=e^{\omega t}w$ and $\tu=:e^{\omega t}u$, then $(\tx,\tu)$ satisfies
\begin{equation}  \label{eqn:shift_nlmodel}
\left\{
\begin{aligned}
& \tx_t=\eta\Delta\tx+(\omega-\beta\gamma)\tx+\int_0^t e^{-\delta(t-s)}\Delta\tx(s) \, ds +F(\tx, \y_\infty),\quad \text{ in } Q ,\\
&   \tx=\tu, \quad  \text{ on } \Sigma_1 , \quad  \frac{\partial \tx}{\partial n}= 0, \quad  \text{ on } \Sigma_2,\\&\tx(x,0)=\y_0(x)-\y_\infty(x), \, \text{ for all } x\in \Omega,
\end{aligned}
\right.
\end{equation}
where, 
\begin{align}\label{F}
    F(\tx, y_\infty)=F_1(\tx, y_\infty)+F_2(\tx, y_\infty),\end{align}
with  
\begin{equation}\label{eq:F1_F2}
\left\{
    \begin{aligned} 
	&F_1(\tx, y_\infty):= a \left( e^{-\omega\kappa t} (\tx+e^{\omega t}y_\infty)^\kappa \nabla (\tx+e^{\omega t}y_\infty) \cdot \boldsymbol{1}  - e^{\nu t}y_\infty^\kappa\nabla y_\infty\cdot \boldsymbol{1}	\right),\\
&F_2(\tx, y_\infty):= 
	\Big( \beta e^{-2\omega\kappa t} (\tx+ e^{\omega t}y_\infty)^{2\kappa+1} -\beta (1+\gamma) e^{-\omega kt} (\tx+ e^{\omega t}y_\infty)^{\kappa+1} \Big) \\&\qquad\qquad\qquad- \Big( \beta e^{\omega t}y_\infty^{2\kappa+1} -\beta (1+\gamma) e^{\omega t}y_\infty^{\kappa+1} +\frac{1}{\delta}e^{(\omega-\delta)t}\Delta y_\infty \Big).
    \end{aligned}
    \right.
\end{equation}
First, let us observe that if $\tx, \, \tu$ satisfy \eqref{eqn:shift_nlmodel}, then  $$\x=e^{-\omega t}\tx, \quad \ub=e^{-\omega t}\tu, \ \text{ for all }\ t>0,$$  satisfy \eqref{eqn:nlmodel}. Also, note that if for some feedback control, $\tu=G(\tx)$ defined by 
\begin{align}\label{fcontol3}
    \tu=G(\tx)=\lambda_1\sum_{j=1}^N\mu_j(\tx, \varphi_j)\Phi_j, \ \text{ on }\ \Sigma_1,
\end{align}
 the system \eqref{eqn:shift_nlmodel} is exponentially stable, so is \eqref{eqn:nlmodel} with control $u=G(\x)$.

Now, we define the set $$\mathcal{D}=L^\infty(0,\infty; H^1(\Omega))\cap L^2(0,\infty;H^2(\Omega))\cap L^{2(\kappa+1)}(0,\infty; L^{6(\kappa+1)}(\Omega))\cap H^1(0,\infty;L^2(\Omega)),$$ endowed with the norm $$\|\zb\|_{\D}=\|\zb\|_{L^\infty(0,\infty; H^1)}+\|\zb\|_{ L^2(0,\infty;H^2)}+\|z\|_{L^{2(\kappa+1)}(0,\infty; L^{6(\kappa+1)})}+\|\zb\|_{H^1(0,\infty;L^2)},$$ and further for any $\rho>0,$ closed ball $\D_\rho$ is defined as $$\D_\rho=\{\zb\in \D:\|\zb\|_\D\leq\rho\}.$$
Our goal is to establish the existence of a stable solution to the system \eqref{eqn:nlmodel} under the prescribed feedback control $u$. To this end, it suffices to prove the following theorem, which ensures the existence of a local solution to the transformed system \eqref{eqn:shift_nlmodel} that satisfies certain energy estimates.

\begin{Theorem}\label{th:vstab non lin}
Let us assume that $0<\omega<\min\{\delta,\omega_0-\epsilon\},$ for any $\epsilon>0,$ and $y_\infty\in  H^2$  be the steady-state solution of the system \eqref{eq:StdNSE}. Also, assume that ($\mathbf{A}1$) is satisfied and the feedback control $\tu$ is defined in \eqref{fcontol3}. Then, there exist positive constants 
$\rho_0$ and $M$ depending on $\eta$, $\delta$, $\lambda_1$, $\beta$, $\gamma$ such that, for all $0<\rho\le \rho_0$ and for all $w_0\in H^1(\Omega)$  satisfying
$$\|w_0\|_{H^1(\Omega)}\leq M\rho, \text{ and } \|\y_\infty\|_{H^2} \le M \rho,$$ the closed loop system \eqref{eqn:shift_nlmodel} admits a unique solution $w\in D_\rho$ satisfying
\begin{align}\label{est_ztilde}
    \|\tx(t)\|\leq M_1(\|w_0\|_{H^1}+\|\y_\infty\|_{H^2}), \ \text{ for all }\  t>0, 
\end{align}
where $M_1$ is a positive constant independent of $t$ and initial data.
\end{Theorem}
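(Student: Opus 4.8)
The plan is to prove Theorem \ref{th:vstab non lin} via a Banach fixed point argument on the closed ball $\D_\rho$.  First I would set up the solution map.  Given an arbitrary $\vb \in \D_\rho$, define the feedback control $\tu = G(\vb) = \lambda_1\sum_{j=1}^{N_\omega}\mu_j(\vb,\varphi_j)\Phi_j$ and let $\tx = \Psi(\vb)$ be the unique solution of the \emph{linear} principal system \eqref{eqn:shift_lmodel}--type problem with right-hand side $f := F(\vb, y_\infty)$ and boundary data $\tu = G(\vb)$, where $F$ is the nonlinearity in \eqref{F}--\eqref{eq:F1_F2}.  The well-posedness and regularity Theorem \ref{reg_sol}, together with the stabilization result of the previous section, guarantees that $\Psi(\vb) \in \D$ and obeys an estimate of the form \eqref{reg_est}.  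The theorem is then equivalent to showing $\Psi$ has a unique fixed point in $\D_\rho$, which is exactly a solution of the closed-loop system \eqref{eqn:shift_nlmodel}.

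The two things to verify are \textbf{self-mapping} ($\Psi(\D_\rho)\subseteq\D_\rho$) and \textbf{contraction}.  For the self-mapping property I would combine the regularity estimate \eqref{reg_est} with a careful bound on $\|F(\vb,y_\infty)\|_{L^2(0,\infty;L^2)}$ and on the boundary-control norm $\|G(\vb)\|_{H^{1/2,1/4}(\Sigma_1)}$.  Since $G$ is linear and finite-dimensional, $\|G(\vb)\|$ is controlled by $\|\vb\|_{L^\infty(0,\infty;L^2)} \le \|\vb\|_\D \le \rho$.  For the nonlinear source, the exponential prefactors $e^{-\omega\kappa t}$, $e^{-2\omega\kappa t}$ etc.\ in \eqref{eq:F1_F2} are crucial: the restriction $\omega<\delta$ and the clustering of $y_\infty$-only terms (which carry the decaying factor $e^{(\omega-\delta)t}$ on the $\Delta y_\infty$ piece) must make every term integrable in time.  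Using the Sobolev, Agmon, Gagliardo--Nirenberg, and generalized H\"older inequalities (Lemmas \ref{lemVB:AgmonIE}--\ref{thm:G-NinBdd}), together with the embedding $\D\hookrightarrow L^{2(\kappa+1)}(0,\infty;L^{6(\kappa+1)})$, I expect a bound of the schematic form
\begin{align*}
\|F(\vb,y_\infty)\|_{L^2(0,\infty;L^2)} \le C\big(\|\vb\|_\D + \|y_\infty\|_{H^2}\big)^{p_1} + C\big(\|\vb\|_\D + \|y_\infty\|_{H^2}\big)^{p_2},
\end{align*}
with exponents $p_1,p_2>1$ coming from the polynomial degrees $2\kappa+1$ and the convective term.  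Feeding this into \eqref{reg_est} gives $\|\Psi(\vb)\|_\D \le C_0(\|w_0\|_{H^1}+\|y_\infty\|_{H^2}) + C(\rho+M\rho)^{p}$; choosing $\rho_0$ small and $M$ appropriately (so that $\|w_0\|_{H^1},\|y_\infty\|_{H^2}\le M\rho$) forces the right side below $\rho$, yielding self-mapping.

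For the contraction I would estimate $\|\Psi(\vb_1)-\Psi(\vb_2)\|_\D$.  By linearity of the principal system, the difference solves the same linear problem with data $F(\vb_1,y_\infty)-F(\vb_2,y_\infty)$ and boundary term $G(\vb_1-\vb_2)$, so \eqref{reg_est} again applies.  The key algebraic point is to factor the nonlinear differences: each polynomial and convective term in $F$ is locally Lipschitz, so a telescoping/mean-value expansion produces a factor $\|\vb_1-\vb_2\|_\D$ multiplied by lower-order norms of $\vb_1,\vb_2,y_\infty$ that are all $O(\rho)$.  This gives $\|\Psi(\vb_1)-\Psi(\vb_2)\|_\D \le C\,\rho^{q}\,\|\vb_1-\vb_2\|_\D$ for some $q>0$, and shrinking $\rho_0$ makes the constant strictly less than $1$.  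The Banach fixed point theorem then yields the unique $w=\tx\in\D_\rho$.  The pointwise bound \eqref{est_ztilde} follows from the $L^\infty(0,\infty;H^1)$ part of the $\D$-norm and the Sobolev embedding $H^1\hookrightarrow L^2$, combined with the self-mapping estimate.  The main obstacle I anticipate is the nonlinear source estimate: handling the high polynomial power $2\kappa+1$ together with the Burgers-type gradient nonlinearity $y^\kappa\nabla y$ requires the full strength of the $\D$-norm (in particular the $L^{2(\kappa+1)}(0,\infty;L^{6(\kappa+1)})$ component established in Theorem \ref{reg_sol}) and a delicate tracking of the time-exponential weights to guarantee global-in-time integrability; getting the exponents and the interplay with $\omega<\delta$ exactly right is where the real work lies.
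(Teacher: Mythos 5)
Your overall architecture --- a Banach fixed point argument on $\D_\rho$, with a self-map estimate driven by bounds on $\|F(\cdot,y_\infty)\|_{L^2(0,\infty;L^2)}$, a contraction estimate driven by local Lipschitz bounds on $F$ obtained by a mean-value/telescoping expansion, and the regularity Theorem~\ref{reg_sol} used to convert source estimates into $\D$-norm estimates --- is exactly the paper's strategy (the paper isolates the two source estimates as Lemmas~\ref{self} and~\ref{lip_map}). However, there is one genuine structural gap in how you set up the iteration map. You define $\Psi(\vb)$ as the solution of the linear problem with boundary data $\tu=G(\vb)$, i.e.\ you freeze the feedback at the \emph{input} of the map. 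The paper instead imposes the closed-loop condition $\tx^\xi=G(\tx^\xi)$ on the \emph{output} (see \eqref{equ:psi_lift_gen_model}), so that only the forcing $F(\xi,y_\infty)$ varies with the iterate and the estimate \eqref{reg_est1} contains no boundary term at all.

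This difference is not cosmetic: with your open-loop choice, the difference $\Psi(\vb_1)-\Psi(\vb_2)$ carries the boundary datum $G(\vb_1-\vb_2)$, and Theorem~\ref{reg_sol} then yields
\begin{align*}
\|\Psi(\vb_1)-\Psi(\vb_2)\|_{\D}\le C\Big(\|G(\vb_1-\vb_2)\|_{H^{\frac{3}{2},\frac{3}{4}}(\Sigma_1)}+\|F(\vb_1,y_\infty)-F(\vb_2,y_\infty)\|_{L^2(0,\infty;L^2)}\Big).
\end{align*}
The second term is indeed $O(\rho^{q})\|\vb_1-\vb_2\|_{\D}$, but the first is $C\,C_G\,\|\vb_1-\vb_2\|_{\D}$ with $C_G$ the (fixed) operator norm of the linear map $\vb\mapsto G(\vb)$ into the trace space; neither $C$ nor $C_G$ shrinks as $\rho\to 0$, since $\lambda_1$, $\mu_j$, $\Phi_j$ and the regularity constant are all determined independently of $\rho$. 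Hence your claimed bound $\|\Psi(\vb_1)-\Psi(\vb_2)\|_{\D}\le C\rho^{q}\|\vb_1-\vb_2\|_{\D}$ does not follow, and the same obstruction contaminates the self-map estimate through the term $C\|G(\vb)\|\lesssim C\,C_G\,\rho$. The repair is precisely the paper's formulation: let each iterate solve the \emph{closed-loop} linear system (whose well-posedness and stability were established in Section~\ref{sec:SLS}), so that the solution map is bounded purely in terms of $w_0$ and the forcing, and the difference equation \eqref{diff_psi_nmodel} has zero initial data, feedback boundary condition in terms of $\mathcal{Z}$ itself, and forcing $F(\xi_1,y_\infty)-F(\xi_2,y_\infty)$ only. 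The remainder of your argument (the role of the $L^{2(\kappa+1)}(0,\infty;L^{6(\kappa+1)})$ component of the $\D$-norm, the time-exponential weights and the restriction $\omega<\delta$ for the $\Delta y_\infty$ term) is consistent with what the paper actually does.
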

To prove this theorem, we first establish certain properties of the nonlinear term $F$ defined in equation \eqref{F}. Specifically, we aim to derive estimates on $F$ to ensure that it satisfies the conditions necessary for applying the Banach fixed point theorem. Therefore, before proceeding with the proof of the main theorem, we will first prove these estimates on $F$: 
\begin{Lemma}\label{self}
    Let $0<\omega<\min\{\delta,\omega_0-\epsilon\},$ for any $\epsilon>0,$ and $y_\infty\in H^2$ be the strong solution of \eqref{eq:StdNSE} and the functions $F_1$, $F_2$ be as defined in \eqref{eq:F1_F2}. Then for any $\tx\in \D$, the functions $F_1$, $F_2$ satisfy
    \begin{align*}
& (i)\ \mbox{$\displaystyle\|F_1(\tx,y_\infty)\|_{L^2(0,\infty;L^2)}\leq C_1\left(\|\tx\|^{\kappa+1}_\D + \|y_\infty\|_{ H^2} \|\tx\|^\kappa_\D + \|y_\infty\|^{\kappa}_{H^2}\|\tx\|_\D\right),$}\\
  &(ii)\ \mbox{$\displaystyle\|F_2(\tx, \y_\infty)\|_{L^2(0,\infty;L^2)}\leq C_1\left(\|\tx\|^{2\kappa+1}_\D + \|y_\infty\|^{2\kappa}_{L^{4\kappa}}\|\tx\|_\D+\|\tx\|^{\kappa+1}_\D+\|y_\infty\|^\kappa_{ H^2}\|\tx\|_\D +\|y_\infty\|_{ H^2}\right),$}
    \end{align*}
    for some constant $C_1>0$.
\end{Lemma}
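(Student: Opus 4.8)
The plan is to prove both estimates by expanding the polynomial nonlinearities with the binomial theorem, exploiting a cancellation that removes the pure $y_\infty$ contributions, and then bounding each resulting monomial by a H\"older argument in space followed by an integration in time. First I would write every occurrence of the state as $\tx+e^{\omega t}y_\infty$ and expand $(\tx+e^{\omega t}y_\infty)^{\kappa}$, $(\tx+e^{\omega t}y_\infty)^{\kappa+1}$ and $(\tx+e^{\omega t}y_\infty)^{2\kappa+1}$. After multiplying by the prefactors $e^{-\omega\kappa t}$, $e^{-\omega\kappa t}$, $e^{-2\omega\kappa t}$ appearing in \eqref{eq:F1_F2}, the index $m=0$ term of each expansion produces exactly $e^{\omega t}y_\infty^{\kappa}\nabla y_\infty$, $e^{\omega t}y_\infty^{\kappa+1}$ and $e^{\omega t}y_\infty^{2\kappa+1}$, which cancel the subtracted steady-state terms. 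Hence $F_1$ and $F_2$ reduce to finite sums of monomials, each carrying at least one factor of $\tx$ (or $\nabla\tx$) and a weight $e^{-c\omega t}$ with $c\ge 0$; only the inhomogeneous term $\frac{1}{\delta}e^{(\omega-\delta)t}\Delta y_\infty$ survives with no $\tx$ factor, and its weight is integrable on $(0,\infty)$ precisely because $\omega<\delta$. This single term yields the summand $\|y_\infty\|_{H^2}$ in $(ii)$, and is where the hypothesis $\omega<\min\{\delta,\omega_0-\epsilon\}$ is used.

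Next I would estimate each monomial separately, classifying them by their exponential weight. Writing $F_1$ as $a\sum_{m=0}^{\kappa}\binom{\kappa}{m}e^{-\omega m t}\,\tx^{m}y_\infty^{\kappa-m}\nabla\tx\cdot\boldsymbol{1}+a\sum_{m=1}^{\kappa}\binom{\kappa}{m}e^{-\omega(m-1)t}\,\tx^{m}y_\infty^{\kappa-m}\nabla y_\infty\cdot\boldsymbol{1}$, and similarly for $F_2$, the weight vanishes only when the corresponding monomial is \emph{linear} in $\tx$; all other monomials decay with a factor $e^{-c\omega t}$, $c\ge 1$. For the no-decay, linear-in-$\tx$ terms (e.g.\ $y_\infty^{\kappa}\nabla\tx$, $\tx\,y_\infty^{\kappa-1}\nabla y_\infty$, $\tx\,y_\infty^{2\kappa}$, $\tx\,y_\infty^{\kappa}$) I would pull $\tx$ out in $L^\infty(\Omega)$ using the embedding $H^2(\Omega)\hookrightarrow L^\infty(\Omega)$ (valid for $d\le 3$), pair the remaining $y_\infty$-power in $L^2(\Omega)$ (which produces the norms $\|y_\infty\|_{L^{4\kappa}}^{2\kappa}$ and $\|y_\infty\|_{H^2}^{\kappa}$ via Lemma \ref{lemPR:SobEmb}), and obtain time-integrability from the $L^2(0,\infty;H^2)$ component of $\|\tx\|_{\D}$. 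For the top-order pure powers $\tx^{\kappa}\nabla\tx$ and $\tx^{\kappa+1}$, $\tx^{2\kappa+1}$ I would use the Generalized H\"older inequality (Lemma \ref{ppsPR-GenHoldIneq}), for instance $\||\tx|^{\kappa}\nabla\tx\|^2\le\|\tx\|_{L^{3\kappa}}^{2\kappa}\|\nabla\tx\|_{L^{6}}^{2}\le C\|\tx\|_{H^1}^{2\kappa}\|\tx\|_{H^2}^{2}$, bound $\|\tx\|_{H^1}^{2\kappa}$ by the $L^\infty(0,\infty;H^1)$ norm, integrate the remaining $\|\tx\|_{H^2}^2$ in time, and for the pure powers $\tx^{\kappa+1},\tx^{2\kappa+1}$ invoke the $L^{2(\kappa+1)}(0,\infty;L^{6(\kappa+1)})$ component of $\D$; these give $\|\tx\|_{\D}^{\kappa+1}$ and $\|\tx\|_{\D}^{2\kappa+1}$. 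The strictly decaying mixed terms $\tx^{m}y_\infty^{\kappa-m}\nabla y_\infty$ ($m\ge 2$) are handled identically, with the integrable factor $\int_0^\infty e^{-2\omega(m-1)t}\,dt<\infty$ supplying the time integrability that $L^\infty(0,\infty;H^1)$ alone would not. All exponents chosen must respect $d\in\{2,3\}$ and the admissible $\kappa$ from \eqref{eqdef:delta} (e.g.\ $H^1\hookrightarrow L^{3\kappa},L^{6(\kappa+1)}$), which is exactly why the restriction on $\kappa$ appears.

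Finally, the expansions yield every intermediate monomial $\|\tx\|_{\D}^{\,a}\|y_\infty\|^{\,b}$ with mixed exponents, whereas the statement keeps only the extreme representatives. I would close the argument by Young's inequality \eqref{eqPR-YoungIneq} (in its weighted arithmetic-geometric-mean form $\prod x_i^{\theta_i}\le\sum\theta_i x_i$): each intermediate exponent pair $(a,b)$ is a convex combination of the exponent pairs of the listed representatives, so the corresponding monomial is dominated by their sum, and the finitely many binomial constants are absorbed into $C_1$. The main obstacle I anticipate is the bookkeeping of the H\"older and Sobolev exponents, namely verifying that the no-decay linear terms are genuinely controllable by the $L^2(0,\infty;H^2)$ norm through $H^2\hookrightarrow L^\infty$ while the higher pure powers require the $L^{2(\kappa+1)}(0,\infty;L^{6(\kappa+1)})$ slot of $\D$, all uniformly across the admissible range of $(d,\kappa)$; the binomial cancellation itself, though essential, is clean once carried out carefully.
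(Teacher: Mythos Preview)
Your proposal is correct and would yield the lemma, but it follows a genuinely different route from the paper's proof. The paper does \emph{not} binomially expand the powers; instead, for each difference of the form $(\tx+e^{\omega t}y_\infty)^{p}-(e^{\omega t}y_\infty)^{p}$ it uses the integral mean--value identity
\[
(\tx+e^{\omega t}y_\infty)^{p}-(e^{\omega t}y_\infty)^{p}=p\,\tx\int_0^1(\theta\tx+e^{\omega t}y_\infty)^{p-1}\,d\theta,
\]
which immediately factors out a single $\tx$, and then applies the pointwise inequality $|a+b|^{q}\le C(|a|^{q}+|b|^{q})$ inside the integral to jump straight to the extreme monomials $|\tx|^{q}$ and $|y_\infty|^{q}$. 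This produces the listed terms $\|\tx\|_\D^{\kappa+1}$, $\|y_\infty\|_{H^2}^{\kappa}\|\tx\|_\D$, etc.\ directly, with no intermediate cross terms to dispose of.

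Your binomial expansion generates all the intermediate monomials $\|\tx\|_\D^{\,a}\|y_\infty\|_{H^2}^{\,b}$ on the lines $a+b=\kappa+1$ and $a+b=2\kappa+1$, and you then collapse them onto the listed endpoints via the weighted AM--GM form of Young's inequality. This is perfectly valid---your observation that every intermediate exponent pair lies on the segment joining two listed endpoints is exactly what makes the collapse work---but it is more laborious. The paper's Taylor--integral device effectively performs your binomial cancellation and your final Young step in one stroke. Your approach has the minor advantage of being completely explicit about which Sobolev and H\"older pairings are used for each term, whereas the paper's estimates are somewhat compressed; conversely, the paper's method avoids the exponent bookkeeping you correctly flag as the main obstacle. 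Both arguments rely on the same Sobolev embeddings (in particular $H^{1}\hookrightarrow L^{3\kappa}$, $H^{2}\hookrightarrow L^{\infty}$ for $d\le 3$, and the $L^{2(\kappa+1)}(0,\infty;L^{6(\kappa+1)})$ slot for the top pure powers), and both invoke $\omega<\delta$ solely for the $e^{(\omega-\delta)t}\Delta y_\infty$ term.
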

\begin{proof}
   $(i)$ First, we recall \begin{align*}F_1(\tx, y_\infty)&= a e^{-\omega\kappa t} (\tx+e^{\omega t}y_\infty)^\kappa \nabla\tx \cdot \boldsymbol{1}+a\left(e^{-\omega\kappa t} (\tx+e^{\omega t}y_\infty)^\kappa \nabla e^{\omega t}y_\infty\cdot \boldsymbol{1}  - e^{\omega t}y_\infty^\kappa\nabla y_\infty\cdot \boldsymbol{1}	\right).\end{align*}
   We consider the first term and estimate using the Cauchy-Schwarz inequality as follows:
   \begin{align}\label{F11}
       \|a e^{-\omega\kappa \cdot} & (\tx(t)+e^{\omega\cdot}y_\infty)^\kappa  \nabla\tx(t)\cdot \boldsymbol{1}\|^2_{L^2(0,\infty;L^2)}\no\\&=\int_0^\infty\|a e^{-\omega\kappa t} (\tx+e^{\omega t}y_\infty)^\kappa \nabla\tx \cdot \boldsymbol{1}\|^2 dt\no\\&\leq C\Big(\int_0^\infty\|e^{-\omega\kappa t}\tx(t)^\kappa\nabla\tx(t)\cdot\boldsymbol{1}\|^2 dt+\int_0^\infty\|y_\infty^\kappa\nabla\tx(t)\cdot\boldsymbol{1}\|^2 dt\Big)\no\\
       &\leq C(\|\tx\|^{2\kappa}_{L^\infty(0,\infty;L^{3\kappa})}+\|y_\infty\|^{2\kappa}_{L^{3\kappa}})\|\tx\|^2_{L^2(0,\infty;H^2)}.
   \end{align}
For the function $\varphi(w)=(e^{-\omega t}w+y_\infty)^{\kappa}$, we have 
\begin{align}\label{aux_res2}
 (e^{-\omega t}\tx+y_\infty)^\kappa - y_\infty^\kappa&=\varphi(\tx)-\varphi(0)=\int_0^1\frac{d\varphi}{d\theta}(\theta\tx)d\theta\no\\&=\int_0^1\varphi'(\theta\tx)\tx \,d\theta=\kappa\int_0^1(\theta e^{-\omega t}\tx+y_\infty)^{\kappa-1}\tx \, d\theta.
\end{align}
Therefore, using \eqref{aux_res2}, we estimate the second term as follows:
   \begin{align}\label{F12}
       &\|a\left(e^{-\omega\kappa\cdot} (\tx+e^{\omega\cdot}y_\infty)^\kappa \nabla e^{\omega \cdot}y_\infty\cdot \boldsymbol{1}- e^{\omega\cdot}y_\infty^\kappa\nabla y_\infty\cdot \boldsymbol{1}\right)\|_{L^2(0,\infty;L^2)}\no\\
       &\qquad=a^2\int_0^\infty\|\Big((e^{-\omega t}\tx(t)+y_\infty)^\kappa-y_\infty^\kappa\Big)e^{\omega t}\nabla y_\infty\cdot\boldsymbol{1}\|^2 dt\no\\
       &\qquad=a^2\kappa^2\int_0^\infty\left\|e^{\omega(1-\kappa) t}\tx(t)\Big(\int_0^1(\theta\tx(t)+e^{\omega t}y_\infty)^{\kappa-1}d\theta\Big)\nabla y_\infty\cdot\boldsymbol{1}\right\|^2
       dt\no\\&\qquad\leq\int_0^\infty e^{2(1-\kappa)\omega t}\|\tx(t)\|^2_{L^\infty}\Big\|\Big(\int_0^1(\theta\tx(t)+e^{\omega t}y_\infty)^{\kappa-1}d\theta\Big)\Big\|^2\|\nabla y_\infty\|^2 dt
       \no\\&\qquad\leq C\|y_\infty\|^2_{H^2}\|\tx\|^{2(\kappa-1)}_{L^\infty(0, \infty; H^1)}
       \|\tx\|^2_{L^2(0, \infty; H^2)}+C\|y_\infty\|^{2\kappa}_{H^2}\|\tx\|^2_{L^2(0, \infty; H^2)}.
   \end{align}
   Adding the estimates \eqref{F11} and \eqref{F12}, one can derive the estimate in $F_1$, which concludes the proof of $(i)$.
   \vskip 0.2cm
   \noindent 
   $(ii)$ We recall
   \begin{align*}
       F_2(\tx,y_\infty)=&\beta e^{-2\omega\kappa t}\Big((\tx+ e^{\omega t}y_\infty)^{2\kappa+1}-(e^{\omega t}y_\infty)^{2\kappa+1}\Big) -\beta (1+\gamma) e^{-\omega \kappa t}\Big( (\tx+ e^{\omega t}y_\infty)^{\kappa+1} \\& \qquad -(e^{\omega t}y_\infty)^{\kappa+1}\Big)+\frac{1}{\delta}e^{(\omega-\delta)t}\Delta y_\infty.
   \end{align*}
   Then we estimate first term using \eqref{aux_res2} as follows:
   \begin{align}
       \|\beta e^{-2\omega\kappa \cdot}&\Big((\tx+ e^{\omega\cdot}y_\infty)^{2\kappa+1}-(e^{\omega \cdot}y_\infty)^{2\kappa+1}\Big)\|^2_{L^2(0,\infty; L^2)}\no\\
       & =\beta^2 \int_0^\infty \left\|e^{-2\omega \kappa t}\Big((\tx(t)+ e^{\omega t}y_\infty)^{2\kappa+1} - (e^{\omega  t} y_\infty)^{2\kappa+1}\Big)\right\|^2 dt \no\\
& =\beta^2(2\kappa+1)^2 \int_0^\infty \left\|\tx(t) e^{-2\omega\kappa t} \left(\int_0^1(\theta\tx(t)+ e^{\omega t}y_\infty)^{2\kappa} d\theta\right) \right\|^2dt\no\\ 
& \leq C \int_0^\infty \|\tx(t)^{\kappa+1}\|^2_{L^6} \|\tx(t)^\kappa\|^2_{L^3} dt + C\int_0^\infty \|\tx(t)\|^2_{L^\infty}\|y_\infty^{2\kappa}\|^2 dt\no \\
 & \leq C\|\tx\|^{2\kappa}_{L^\infty(0,\infty; L^{3\kappa})}  \|\tx\|^{2(\kappa+1)}_{L^{2(\kappa+1)}(0,\infty; L^{6(\kappa+1)})} + \|y_\infty\|^{4\kappa}_{L^{4\kappa}}\|\tx\|^2_{L^2(0,\infty;H^2)}\no\\
  & \leq C\|\tx\|^{2\kappa}_{L^\infty(0,\infty; H^1)}  \|\tx\|^{2(\kappa+1)}_{L^{2(\kappa+1)}(0,\infty; L^{6(\kappa+1)})} + \|y_\infty\|^{4\kappa}_{L^{4\kappa}}\|\tx\|^2_{L^2(0,\infty;H^2)}.
   \end{align}
   Similarly, we estimate the second term as
   \begin{align}
       \|e^{-\omega\kappa t}&\Big( (\tx+ e^{\omega t}y_\infty)^{\kappa+1} -(e^{\omega t}y_\infty)^{\kappa+1}\Big)\|^2_{L^2(0,\infty; L^2)}\no\\& =  \int_0^\infty    \left\|  e^{-\omega\kappa t}\tx(t)\left(\int_0^1(\theta\tx(t)+ e^{\nu t}y_\infty)^{\kappa} d\theta\right)   \right\|^2 dt\no \\
& \leq C \int_0^\infty \|\tx(t)^{\kappa +1}\|^2dt +\int_0^\infty \|\tx(t)y_\infty^\kappa\|^2dt\no\\
& \leq \|\tx\|^{2(\kappa+1)}_{L^{2(\kappa+1)}(0,\infty; L^{2(\kappa+1)})} + \|y_\infty\|^{2\kappa}_{L^{2\kappa}}\|\tx\|^2_{L^2(0,\infty;H^2)}\no\\
& \leq \|\tx\|^{2(\kappa+1)}_{L^{2(\kappa+1)}(0,\infty; L^{6(\kappa+1)})} + \|y_\infty\|^{2\kappa}_{L^{2\kappa}}\|\tx\|^2_{L^2(0,\infty;H^2)}.
   \end{align}
  Combining these two estimates, we conclude (ii). 
\end{proof} 
We now derive a Lipschitz-type estimate for $F$. In this context, we state the following lemma:
\begin{Lemma}\label{lip_map}
    Let $0 < \omega < \min\{\delta, \omega_0 - \epsilon\}$ for any $\epsilon > 0$, and let $y_\infty \in H^2$ denote the strong solution of \eqref{eq:StdNSE}. Consider the functions $F_1$ and $F_2$ as defined in \eqref{eq:F1_F2}.
     Then for any $\tx_1, \, \tx_2\in \D$, the functions $F_1$, $F_2$ satisfies
    \begin{align*}
	(a)  \,  \|F_1(\tx_1,y_\infty) - F_1(\tx_2,y_\infty)\|_{L^2(0,\infty; L^2)} &\leq C_2 \|\tx_1 -\tx_2\|_\D \Big( \|\tx_1 \|^\kappa_\D+\|\tx_2 \|^\kappa_\D +\|\tx_1\|_\D\|\tx_2\|^{\kappa-1}_\D \\
	& \qquad + (\|\tx_1\|^{\kappa-1}_\D + \|\tx_2\|^{\kappa-1}_\D)\|y_\infty\|_{H^2} +\|y_\infty\|^\kappa_{H^2}\\
	& \qquad + \|\tx_1\|_\D \|y_\infty\|^{\kappa-1}_{H^2} + \|y_\infty\|^{\kappa}_{H^2}   \Big), 
\end{align*}
and 
\begin{align*}
	(b)  \,	\|F_2(\tx_1,y_\infty) - F_2(\tx_2,y_\infty)\|_{L^2(0,\infty; L^2)} & \leq C_2\|\tx_1-\tx_2\|_\D\Big( \|\tx_1\|^{\kappa-1}_\D+ \|\tx_2\|^{\kappa-1}_\D + \|y_\infty\|^{\kappa-1}_{H^2} \\
	& \qquad  + \|\tx_1\|^{\kappa}_\D+ \|\tx_2\|^{\kappa}_\D + \|y_\infty\|^{\kappa}_{H^2}+\|y_\infty\|_{H^2}\Big),
\end{align*}
for some constant $C_2>0.$
\end{Lemma}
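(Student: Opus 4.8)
The plan is to reduce both estimates to the same toolkit used in Lemma \ref{self}: the mean-value (integral) representation for a difference of powers, together with the generalized Hölder inequality (Lemma \ref{ppsPR-GenHoldIneq}), the Sobolev embeddings (Lemma \ref{lemPR:SobEmb}), and Agmon's inequality (Lemma \ref{lemVB:AgmonIE}). The first observation is that every summand of $F_1$ and $F_2$ depending only on $y_\infty$ --- namely $a e^{\omega t} y_\infty^\kappa \nabla y_\infty\cdot\boldsymbol 1$ in $F_1$ and $\beta e^{\omega t}y_\infty^{2\kappa+1}-\beta(1+\gamma)e^{\omega t}y_\infty^{\kappa+1}+\frac1\delta e^{(\omega-\delta)t}\Delta y_\infty$ in $F_2$ --- is independent of $\tx$ and therefore cancels in the differences $F_i(\tx_1,y_\infty)-F_i(\tx_2,y_\infty)$. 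Hence only the $\tx$-dependent parts must be bounded, and they are polynomial in $\tx$ with coefficients carrying negative exponential weights.

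For (a), after the cancellation I would add and subtract $(\tx_1+e^{\omega t}y_\infty)^\kappa\nabla(\tx_2+e^{\omega t}y_\infty)$ to write
\begin{align*}
& (\tx_1+e^{\omega t}y_\infty)^\kappa\nabla(\tx_1+e^{\omega t}y_\infty)-(\tx_2+e^{\omega t}y_\infty)^\kappa\nabla(\tx_2+e^{\omega t}y_\infty) \\
& \quad = (\tx_1+e^{\omega t}y_\infty)^\kappa\nabla(\tx_1-\tx_2)+\big[(\tx_1+e^{\omega t}y_\infty)^\kappa-(\tx_2+e^{\omega t}y_\infty)^\kappa\big]\nabla(\tx_2+e^{\omega t}y_\infty),
\end{align*}
and then represent the bracketed power difference, exactly as in \eqref{aux_res2}, through
\begin{align*}
(\tx_1+e^{\omega t}y_\infty)^\kappa-(\tx_2+e^{\omega t}y_\infty)^\kappa=\kappa(\tx_1-\tx_2)\int_0^1\big(\theta\tx_1+(1-\theta)\tx_2+e^{\omega t}y_\infty\big)^{\kappa-1}\,d\theta.
\end{align*}
Multiplying by $ae^{-\omega\kappa t}$, every term now factors through $\tx_1-\tx_2$ and a product of $\kappa$ copies of $\tx_1,\tx_2,e^{\omega t}y_\infty$ (plus one gradient). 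Expanding binomially, each cross term acquires a weight $e^{-\omega\kappa t}e^{j\omega t}=e^{-(\kappa-j)\omega t}\le 1$ for $0\le j\le\kappa$ (using $\omega>0$ and $\kappa\ge 1$), so the temporal factors are harmless; splitting the spatial $L^2$ norm by Hölder and controlling the powers of $\tx_i$ via $H^1\hookrightarrow L^{3\kappa}$ and the gradients via $L^2(0,\infty;H^2)$ --- precisely the mechanism of \eqref{F11}--\eqref{F12} --- produces the asserted bound.

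For (b) I would apply the same mean-value identity to the two polynomial pieces at exponents $m=2\kappa+1$ and $m=\kappa+1$,
\begin{align*}
(\tx_1+e^{\omega t}y_\infty)^{m}-(\tx_2+e^{\omega t}y_\infty)^{m}=m(\tx_1-\tx_2)\int_0^1\big(\theta\tx_1+(1-\theta)\tx_2+e^{\omega t}y_\infty\big)^{m-1}\,d\theta,
\end{align*}
so that after factoring $\tx_1-\tx_2$ one is left with a polynomial of degree $m-1$ in $\tx_1,\tx_2,e^{\omega t}y_\infty$, weighted by $e^{-2\omega\kappa t}$ respectively $e^{-\omega\kappa t}$; as in (a) the combined exponential weights reduce to powers of $e^{-\omega t}$ bounded by $1$. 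The remaining factors are then distributed among the three components of the $\D$-norm --- $L^\infty(0,\infty;H^1)$, $L^2(0,\infty;H^2)$, and $L^{2(\kappa+1)}(0,\infty;L^{6(\kappa+1)})$ --- together with the fixed spaces $L^{4\kappa}$ and $H^2$ for the $y_\infty$-factors, using exactly the Hölder allocation already validated in part (ii) of Lemma \ref{self}. This yields the stated Lipschitz bound for $F_2$.

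The main obstacle is the bookkeeping for the top-order contribution $m=2\kappa+1$: after extracting $\tx_1-\tx_2$ there remain $2\kappa$ factors to be estimated simultaneously in the space and time variables, and one must pick Hölder exponents consistent in both so that each spatial factor lands in a norm dominated by $\|\cdot\|_\D$ or $\|y_\infty\|_{H^2}$ while the conjugate temporal exponents sum to one and render the integral over $(0,\infty)$ finite. This is the same delicate balancing already carried out for the self-map estimate, and I expect the identical exponent allocation to close the argument, so that no analytic ingredient beyond Hölder, Sobolev, and Agmon is needed.
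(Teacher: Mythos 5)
Your proposal is correct and follows essentially the same route as the paper: cancel the $\tx$-independent parts, telescope the products, represent each difference of $\kappa$-th (resp.\ $(2\kappa+1)$-th and $(\kappa+1)$-th) powers by the integral mean-value identity \eqref{aux_res1} so that $\tx_1-\tx_2$ factors out, absorb the exponential weights using $\omega>0$, and distribute the remaining factors by H\"older and Sobolev embedding among the components of the $\D$-norm — including the $L^{2(\kappa+1)}(0,\infty;L^{6(\kappa+1)})$ component (via interpolation) for the top-degree term when $d=3$, $\kappa=2$, exactly the "delicate balancing" you flag. The only cosmetic difference is that your telescoping for $F_1$ attaches the power difference to $\nabla(\tx_2+e^{\omega t}y_\infty)$ rather than splitting off the $\nabla y_\infty$ contribution as a separate third term, which is analytically equivalent.
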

\begin{proof} (a) Note that for all $\tx_1, \tx_2\in D$, we write
\begin{align*}
   F_1(\tx_1,y_\infty) - F_1(\tx_2,y_\infty)&= a e^{-\omega \kappa t} \left(  (\tx_1+e^{\omega  t}y_\infty)^\kappa \nabla \tx_1 \cdot \boldsymbol{1} -   (\tx_2+e^{\omega  t}y_\infty)^\kappa \nabla \tx_2 \cdot \boldsymbol{1} \right) \\
 & \quad + a e^{-\omega \kappa t}\left((\tx_1+e^{\omega  t}y_\infty)^\kappa -  (\tx_2+e^{\omega  t}y_\infty)^\kappa\right)  e^{\omega t}\nabla y_\infty \cdot \boldsymbol{1}\\
 & =    a e^{-\omega \kappa t} \left( \left( (\tx_1+e^{\omega  t}y_\infty)^\kappa -   (\tx_2+e^{\omega  t}y_\infty)^\kappa \right)\nabla \tx_1 \cdot \boldsymbol{1} \right) \\
 & \quad + a e^{-\omega \kappa t} \left(  (\tx_2+e^{\omega  t}y_\infty)^\kappa \nabla (\tx_1 -\tx_2) \cdot \boldsymbol{1}  \right) \\
 & \quad + a e^{-\omega \kappa t}\left((\tx_1+e^{\omega  t}y_\infty)^\kappa -  (\tx_2+e^{\omega  t}y_\infty)^\kappa\right)  e^{\omega t}\nabla y_\infty \cdot \boldsymbol{1}.
 \end{align*}
 Let us set the function $\varphi(w)=w^{\kappa}$. Then, we have 
\begin{align}\label{aux_res1}
(\tx_1+e^{\omega  t}y_\infty)^\kappa -(\tx_2+e^{\omega  t}y_\infty)^\kappa&=\varphi(\tx_1+e^{\omega  t}y_\infty)-\varphi(\tx_2+e^{\omega t}y_\infty)\no\\&=\int_0^1\frac{d}{d\theta}\varphi\big((\theta(\tx_1+e^{\omega t}y_\infty)+(1-\theta)(\tx_2+e^{\omega t}y_\infty)\big)d\theta\no\\&=\int_0^1\varphi'((\theta(\tx_1+e^{\omega t}y_\infty)+(1-\theta)(\tx_2+e^{\omega t}y_\infty))(\tx_1-\tx_2)d\theta\no\\
&=\kappa\int_0^1(\theta\tx_1+(1-\theta)\tx_2+e^{\omega t}y_\infty)^{\kappa-1}(\tx_1-\tx_2)d\theta.
\end{align}
 Now, we estimate each term one by one, using \eqref{aux_res1} and H\"older's inequality   as follows:
 \begin{align*}
     &\|a e^{-\omega \kappa \cdot} \left( \left( (\tx_1+e^{\omega \cdot}y_\infty)^\kappa -   (\tx_2+e^{\omega\cdot}y_\infty)^\kappa \right)\nabla \tx_1 \cdot \boldsymbol{1} \right)\|_{L^2(0,\infty; L^2)}\no\\&
     =a^2\int_0^\infty \left\|e^{-\omega\kappa t}(\tx_1 - \tx_2)(t) \Big(\int_0^1\big( \theta(\tx_1(t)+(1-\theta)\tx_2(t)+ e^{\omega t}y_\infty) \big)^{\kappa-1} d\theta\Big)\nabla\tx_1(t) \cdot \boldsymbol{1}\right\|^2 dt  \no \\
     & \leq C \int_0^\infty  e^{-2\omega \kappa t} \|\tx_1(t) - \tx_2(t)\|^2_{L^6} \|\tx_1(t)^{\kappa-1} + \tx_2(t)^{\kappa-1} +e^{(\kappa-1)\omega t} y_\infty^{\kappa-1}\|^2_{L^6}\|\nabla \tx_1(t)\|^2_{L^6} dt\no\\
 & \le C \|\tx_1-\tx_2\|^2_{L^{\infty}(0,\infty; H^1)} \Big( \|\tx_1\|^{2\kappa-2}_{L^{\infty}(0,\infty; H^1)} + \|\tx_2\|^{2\kappa-2}_{L^{\infty}(0,\infty; H^1)} \no \\
 & \qquad + \|y_\infty\|^{2\kappa-2}_{H^2} \Big) \|\tx_1\|^2_{L^2(0,\infty; H^2)}.
 \end{align*}
 Again, an application of H\"older's inequality leads to
 \begin{align*}
    & \|a e^{-\omega \kappa t} \left(  (\tx_2+e^{\omega  t}y_\infty)^\kappa \nabla (\tx_1 -\tx_2) \cdot \boldsymbol{1}  \right)\|_{L^2(0,\infty; L^2)}\no\\&= \int_0^\infty \| (e^{-\omega  t}\tx_2(t)+y_\infty)^\kappa \nabla (\tx_1 -\tx_2)(t) \cdot \boldsymbol{1}\|^2 dt \no\\
& \leq C\int_0^\infty \|e^{-\omega  t}\tx_2(t)^\kappa \nabla (\tx_1 -\tx_2)(t) \cdot \boldsymbol{1}\|^2 dt + C\int_0^\infty \| y_\infty^\kappa \nabla (\tx_1 -\tx_2)(t) \cdot \boldsymbol{1}  \|^2 dt\no \\
& \leq C  \int_0^\infty  \| \tx_2(t)^\kappa\|^2_{L^3} \|\nabla (\tx_1(t) -  \tx_2(t))\|^2_{L^6} dt  +  C  \int_0^\infty  \| y_\infty^\kappa\|^2_{L^3} \|\nabla (\tx_1(t) -  \tx_2(t))\|^2_{L^6} dt\no\\
 & \leq C \left(  \|\tx_2\|^{2\kappa}_{L^{\infty}(0,\infty; L^{3\kappa})} + \| y_\infty\|^{2\kappa}_{L^{3\kappa}} \right)\|\tx_1-\tx_2\|^2_{L^2(0,\infty; H^2)}.
 \end{align*}
 Analogously, we estimate the third term
 \begin{align*}
  & \|a e^{-\omega \kappa \cdot}\left((\tx_1+e^{\omega\cdot}y_\infty)^\kappa -  (\tx_2+e^{\omega\cdot}y_\infty)^\kappa\right)  e^{\omega \cdot}\nabla y_\infty \cdot \boldsymbol{1}\|_{L^2(0,\infty; L^2)}\\&  =a^2\kappa^2\int_0^\infty \left\| e^{\omega(1-\kappa) t}(\tx_1 -\tx_2)(t) \left(\int_0^1( \theta\tx_1(t)+  (1-\theta)\tx_2(t)+e^{\omega t}y_\infty)^{\kappa-1}  d\theta\right) \nabla y_\infty \cdot \boldsymbol{1}\right\|^2 dt \no  \\
	& \leq C \int_0^\infty  e^{\omega(1-\kappa) t} \|\tx_1(t) - \tx_2(t)\|^2_{L^6} \|\tx_1(t)^{\kappa-1} + \tx_2(t)^{\kappa-1} + e^{\omega t}y_\infty^{\kappa-1}\|^2_{L^6}\|\nabla y_\infty\|^2_{L^6} dt \no\\
	& \leq C \|\tx_1-\tx_2\|^2_{L^2(0,\infty; H^1)} \Big( \|\tx_1\|^{2\kappa-2}_{L^{\infty}(0,\infty; H^1)} + \|\tx_2\|^{2\kappa-2}_{L^{\infty}(0,\infty; H^1)}+ \|y_\infty\|^{2\kappa-2}_{ H^2} \Big) \|y_\infty\|^2_{H^2}.
 \end{align*}
 Then, adding these three estimates together, we obtain (a).
 \vskip 0.2cm
 \noindent 
(b) For $\tx_1, \tx_2\in D$  and $y_\infty\in  H^2(\Omega)$, we have  \begin{align*}
    F_2(\tx_1,y_\infty) - F_2(\tx_2,y_\infty)& =  \beta e^{-2\omega \kappa t}\left( (\tx_1+ e^{\omega t}y_\infty)^{2\kappa+1} - (\tx_2+ e^{\omega t}y_\infty)^{2\kappa+1}\right) \\
    & \qquad -\beta(1+\gamma) e^{-\omega \kappa t}\left(  (\tx_1+ e^{\omega t}y_\infty)^{\kappa+1} - (\tx_2+ e^{\omega t}y_\infty)^{\kappa+1} \right).
\end{align*}
Again using \eqref{aux_res1}, the first term can be estimated as 
\begin{align*}
	\Big\| e^{-2\omega \kappa \cdot} & \Big( (\tx_1+ e^{\omega \cdot}y_\infty)^{2\kappa+1}  - (\tx_2+ e^{\omega \cdot}y_\infty)^{2\kappa+1}\Big)\Big\|^2_{L^2(0,\infty; L^2)} \\
	& =  (2\kappa+1)^2\int_0^\infty \Big\| e^{-2\omega \kappa t}(\tx_1-\tx_2)(t) \Big(\int_0^1(\theta \tx_1(t)+ (1-\theta) \tx_2(t)+ e^{\omega t}y_\infty)^{2\kappa} d\theta\Big)\Big\|^2 dt \\
	& \leq C \int_0^\infty e^{-4\omega \kappa t}\|\tx_1-\tx_2\|^2_{L^6} \big(\|\tx_1(t)\|^{4\kappa}_{L^{6\kappa}}+ \|\tx_2(t)\|^{4\kappa}_{L^{6\kappa}}+\|e^{\omega t}y_\infty\|^{4\kappa}_{L^{6\kappa}}\big) dt \\
	& \le C \|\tx_1 - \tx_2\|_{L^\infty (0,\infty; H^1)}^2 \left( 	\int_0^\infty  \|\tx_1(t)\|_{ L^{6\kappa}}^{4\kappa} dt + 	\int_0^\infty  \|\tx_2(t)\|_{ L^{6\kappa}}^{4\kappa} dt  \right) \\
	& \qquad + C \|y_\infty\|_{H^2}^2 \|\tx_1 - \tx_2\|_{L^2(0,\infty; H^1)}^2
\end{align*}
Above, we have used the embedding $H^2(\Omega)\hookrightarrow L^{6\kappa}(\Omega)$ for the values of $\kappa$ defined in \eqref{eqdef:delta}. For the case $d=3$ with $\kappa=1$ or $d=2$ with $\kappa\in\mathbb{N},$ the above estimate is enough as the Sobolev embedding $H^1(\Omega)\hookrightarrow L^{6\kappa}(\Omega)$ holds true for the above choice of $\kappa$. However, for $d=3$ with $\kappa=2,$  we use the interpolation inequality to have
\begin{align*}
	\int_0^\infty  \|\tx_i(t)\|_{ L^{6\kappa}}^{4\kappa} dt & \le C \int_0^\infty \|\tx_i(t)\|_{ L^{6(\kappa-1)}}^{2\kappa} \|\tx_i(t)\|_{ L^{6(\kappa+1)}}^{2\kappa}dt \\
	& \le C \int_0^\infty \|\tx_i(t)\|_{ L^{6(\kappa-1)}}^{2(\kappa-1)} \|\tx_i(t)\|_{ L^{6(\kappa+1)}}^{2(\kappa+1)} dt\\
	&\le C_s \|\tx_i\|_{L^\infty (0,\infty; H^1)}^{2(\kappa-1)} \|\tx_i\|_{L^{2(\kappa+1)} (0,\infty; L^{6(\kappa+1)})}^{2(\kappa+1)}, \quad i=1,2.
\end{align*} 
Using this interpalation inequality and and Sobolev embedding theorem, we can bound the first term of (b).

Similarly, we estimate the second term as follows:
\begin{align*}
    \Big\|e^{-\omega \kappa t}&\left(  (\tx_1+ e^{\omega t}y_\infty)^{\kappa+1} - (\tx_2+ e^{\omega t}y_\infty)^{\kappa+1}\right)\Big\|^2_{L^2(0,\infty; L^2)}\\& = (\kappa+1)^2 \int_0^\infty \Big\| e^{-\omega \kappa t}(\tx_1-\tx_2)(t) \Big(\int_0^1(\theta \tx_1(t)+ (1-\theta) \tx_2(t)+ e^{\omega t}y_\infty)^{\kappa} d\theta\Big)\Big\|^2 dt \\
	& \leq C \int_0^\infty e^{-2\omega \kappa t}\|\tx_1-\tx_2\|^2_{L^\infty} \big(\|\tx_1(t)\|^{2\kappa}_{L^{2\kappa}}+ \|\tx_2(t)\|^{2\kappa}_{L^{2\kappa}}+\|e^{\omega t}y_\infty\|^{2\kappa}_{L^{2\kappa}}\big)dt \\
    &\leq C\big(\|\tx_1\|^{2\kappa}_{L^\infty(0,\infty; H^1)}+\|\tx_2\|^{2\kappa}_{L^\infty(0, \infty; H^1)}+\|y_\infty\|^{2\kappa}_{H^2}\big)\|\tx_1-\tx_2\|^2_{L^2(0, \infty; H^2)}\\&\leq C\big(\|\tx_1\|^{2\kappa}_\D+\|\tx_2\|^{2\kappa}_\D+\|y_\infty\|^{2\kappa}_{H^2}\big)\|\tx_1-\tx_2\|^2_{L^2(0, \infty; H^2)},
\end{align*}
where we have used the Sobolev embedding $H^1(\Omega)\hookrightarrow L^{2\kappa}(\Omega)$, for $\kappa$  defined in \eqref{eqdef:delta}.
Therefore, by adding the above two inequalities, we conclude  the proof of (b).
\end{proof}

We are now prepared to establish the main stabilization result for the nonlinear system \eqref{eqn:modelled} in a neighborhood of its steady state solution $y_\infty$.


\begin{proof}[Proof of Theorem \ref{th:vstab non lin}]
  We use the Banach fixed point theorem to prove this theorem. For $\rho>0$ and a given $\xi\in D_\rho$, $w_0\in H^1$, let $\tx^\xi$ satisfy
    \begin{equation}\label{equ:psi_lift_gen_model}
\left\{ 
\begin{aligned}
&\tx^\xi_t = \eta\Delta\tx^\xi+(\omega-\beta\gamma)\tx^\xi+\int_0^t e^{-\delta(t-s)}\Delta\tx^\xi(s) \, ds +F(\xi,\y_\infty)  , \text{ in } Q,\\
&\tx^\xi=\tu^\xi=G(\tx^\xi), \text{ on } \Sigma_1, \, \frac{\partial\tx^\xi}{\partial n}=0, \text{ on }\Sigma_2,\\
&\tx^\xi(x,0)=\tx_0, \text{ in }\Omega,
\end{aligned}\right.
\end{equation}
where $F$ is defined in \eqref{F} and $\tu^\xi=G(\tx^\xi)=\lambda_1\sum_{j=1}^N\mu_j(\tx^\xi, \varphi_j)\Phi_j$. For any $\xi\in D$, $F(\xi,\y_\infty)$ satisfies Lemmas \ref{self} and \ref{lip_map}. Then from Theorem \ref{reg_sol}, we obtain $\tx^\xi\in D$ and obeys the estimate \eqref{reg_est}, that is,
\begin{align}\label{reg_est1}
      &\|\tx^\xi\|_{L^\infty(0,\infty; H^1)}+\|\tx^\xi\|_{L^2(0,\infty;H^2)}+\|\tx^\xi\|_{L^{2(\kappa+1)}(0,\infty; L^{6(\kappa+1)})}+\|\tx^\xi\|_{H^1(0,\infty;L^2)} \no\\ &\quad\leq C(\|w_0\|_{H^1}+\|F(\xi,\y_\infty)\|_{L^2(0,\infty;L^2)})\no\\
      &\quad \leq C\|w_0\|_{H^1} +CC_1(\|\xi\|^{\kappa+1}_\D + \|y_\infty\|_{H^2} \|\xi\|^\kappa_\D + \|y_\infty\|^{\kappa}_{H^2}\|\xi\|_\D \no\\&\qquad\qquad+\|\xi\|^{2\kappa+1}_\D +\|y_\infty\|^{2\kappa}_{L^{4\kappa}}\|\xi\|_\D +\|y_\infty\|_{H^2}),
     \end{align} 
     where $C$ and $C_1$ are the constants introduced in Theorem \ref{reg_sol} and Lemma \ref{self}, respectively. Now for 
     \begin{align}\label{rho-cndtn}
     \|w_0\|_{H^1}+\|y_\infty\|_{H^2}\leq \mathcal{K}_1\rho\  \text{ and } \ \rho\leq \mathcal{K}_1,    
     \end{align}
     where 
     \begin{align*}
         \mathcal{K}_1=\min\bigg\{\frac{1}{7C}, \frac{1}{(7CC_1)^\frac{1}{\kappa}},\ \frac{1}{(7CC_1)^\frac{1}{2\kappa}}, \frac{1}{7CC_1}\bigg\},
     \end{align*}
     we obtain
     \begin{align*}
         \|\tx^\xi\|_{L^\infty(0,\infty; H^1)}+\|\tx^\xi\|_{L^2(0,\infty;H^2)}+\|\tx^\xi\|_{L^{2(\kappa+1)}(0,\infty; L^{6(\kappa+1)})}+\|\tx^\xi\|_{H^1(0,\infty;L^2)}\\ \leq \frac{\rho}{7} +CC_1(\rho^{\kappa+1}+\mathcal{K}_1\rho^{\kappa+1}+\mathcal{K}_1^\kappa\rho^{\kappa+1}+\rho^{2\kappa+1}+\mathcal{K}_1^{2\kappa}\rho^{2\kappa+1}+\mathcal{K}_1\rho)\leq \rho.
     \end{align*}
  This concludes the proof of $\tx^\xi\in \D_\rho$.
  
  Let $w_0\in H^1$ and $\rho$ satisfy \eqref{rho-cndtn}. For all $\xi\in \D_\rho,$ we set the mapping $\mathcal{Q}:\D_\rho\to \D_\rho$ defined by $\mathcal{Q}(\xi)=\tx^\xi$, where $\tx^\xi$ is the solution of \eqref{equ:psi_lift_gen_model}. To proceed with Banach's fixed point theorem, it remains to show that the map $\mathcal{Q}$ is a contraction mapping. To this end, let $\xi_1, \xi_2\in \D_\rho$ and set $\mathcal{Z}=\tx^{\xi_1}-\tx^{\xi_2}$. Then, $\mathcal{Z}$ satisfies the following system:
\begin{equation}\label{diff_psi_nmodel}
\left\{ 
\begin{aligned}
&\mathcal{Z}_t = \eta\Delta\mathcal{Z}+(\omega-\beta\gamma)\mathcal{Z}+\int_0^t e^{-\delta(t-s)}\Delta\mathcal{Z}(s) \, ds +F(\xi_1,\y_\infty)-F(\xi_2,\y_\infty)  , \text{ in } Q,\\
&\mathcal{Z}=G(\mathcal{Z}), \text{ on } \Sigma_1, \, \frac{\partial\mathcal{Z}}{\partial n}=0, \text{ on }\Sigma_2,\\
&\mathcal{Z}=0, \text{ in }\Omega.
\end{aligned}\right.
\end{equation}
Once again using Theorem \ref{reg_sol} and  Lemma \ref{lip_map}, we obtain
\begin{align}
    \|\mathcal{Z}\|_\D&\leq C\|F(\xi_1,\y_\infty)-F(\xi_2,\y_\infty)\|_{L^2(0,\infty; L^2)}\no\\
    &\leq C\|F_1(\xi_1,\y_\infty)-F_1(\xi_2,\y_\infty)\|_{L^2(0,\infty; L^2)}+\|F_2(\xi_1,\y_\infty)-F_2(\xi_2,\y_\infty)\|_{L^2(0,\infty; L^2)}\no\\
    &\leq CC_2 \|\xi_1 -\xi_2\|_\D \Big( 2\|\xi_1 \|^\kappa_D+2\|\xi_2 \|^\kappa_\D +\|\xi_1\|_\D\|\xi_2\|^{\kappa-1}_\D+ \|\xi_1\|_\D \|y_\infty\|^{\kappa-1}_{H^2} \no\\
	& \qquad+ (\|\xi_1\|^{\kappa-1}_\D + \|\xi_2\|^{\kappa-1}_\D)(\|y_\infty\|_{H^2}+1) +2\|y_\infty\|^\kappa_{H^2} + \|y_\infty\|^{\kappa}_{H^2} +\|y_\infty\|^{\kappa-1}_{H^2}+\|y_\infty\|_{H^2}  \Big).
\end{align}
For all 
     \begin{align}\label{rho-cndtn2}
     \|y_\infty\|_{H^2}\leq \mathcal{K}_2\rho\  \text{ and } \ \rho\leq \mathcal{K}_2,    
     \end{align}
     where 
     \begin{align*}
         \mathcal{K}_2=\min\bigg\{\frac{1}{(32CC_2)^\frac{1}{\kappa}}, \frac{1}{(32CC_2)^\frac{1}{\kappa-1}}, \frac{1}{(16CC_1)^\frac{1}{\kappa-1}}, \frac{1}{(16CC_2)^\frac{1}{\kappa-2}}\bigg\},
     \end{align*}
     we obtain
     \begin{align*}
        \|\mathcal{Z}\|_\D\leq &CC_2 \|\xi_1 -\xi_2\|_\D \Big(5\rho^\kappa+\mathcal{K}_2^{\kappa-1}\rho^\kappa+2\rho^{\kappa-1}+2\mathcal{K}_2\rho^\kappa+3\mathcal{K}_2^\kappa\rho^\kappa\Big)+\frac{\rho}{16}\|\xi_1 -\xi_2\|_\D\no\\
        \leq & \frac{15}{16}\|\xi_1 -\xi_2\|_\D,
     \end{align*}
    where $C$, $C_2$ are the given constants in Theorem \ref{reg_sol} and  Lemma \ref{lip_map}, respectively. Then, choosing \begin{align}\label{rho_M}\rho_0=M=\min\{\mathcal{K}_1, \mathcal{K}_2\},\end{align} we have 
     \begin{align*}
         \|\tx^{\xi_1}-\tx^{\xi_2}\|_\D\leq \frac{15}{16}\|\xi_1 -\xi_2\|_\D.
     \end{align*}
     For all $0<\rho\leq\rho_0,$ and $M$ satisfying \eqref{rho_M}, the map $\mathcal{Q}$ defined from $\D_\rho$ to $\D_\rho$ is a contraction map and therefore using the Banach fixed point theorem $\mathcal{Q}$ has a fixed point in $\D_\rho$ and hence \eqref{eqn:shift_nlmodel} admits a solution in $\D_\rho.$ 

    Finally, for $\xi=\tx\in \D_\rho$, repeating the estimate \eqref{reg_est1}, we derive  \eqref{est_ztilde}.
   \end{proof}

\begin{Remark}
We observe that the assumption $\omega < \delta$ is no longer necessary when $y_\infty = 0$, since it was only required to estimate the last term in $F_2$ (see \eqref{eq:F1_F2}). Therefore, the exponential stability of \eqref{eqn:modelled} around the zero steady state can still be established, with a improved decay rate satisfying $0<\omega<\omega_0-\epsilon,$ for any $\epsilon>0$.
\end{Remark}
\section{Comments and Conclusion}
In this work, we have developed a finite-dimensional boundary feedback controller that stabilizes a stationary solution of the generalized Burgers-Huxley equation with memory. To the best of our knowledge, this is the first result in the literature to address stabilization for this class of systems. The feedback control is implemented on a subset of the boundary and has a linear, finite-dimensional form, explicitly described by \eqref{fcontrol}. It utilizes only $N \in \mathbb{N}$ eigenfunctions of the Laplace operator under mixed boundary conditions, making the approach computationally efficient.

The assumption (\textbf{A1}) is not crucial and can be eliminated by applying a similar method as in \cite{M17}. Additionally, it is important to highlight that the linearized system corresponding to \eqref{eqn:modelled}, when linearized around the zero solution, is exponentially stable with a decay rate of $-(\eta\lambda_1 + \beta\gamma)$ in the absence of control and with $f_s = 0$. However, under these conditions, the full nonlinear system is not stable. Introducing boundary control enhances the decay rate of the linearized system. More significantly, this same boundary control is effective in stabilizing the full nonlinear system and achieves the same decay rate when analyzed near the zero steady state.


	\medskip\noindent	\textbf{Declarations:} 

\noindent 	\textbf{Ethical Approval:}   Not applicable 


\noindent  \textbf{Conflict of interest: }On behalf of all authors, the corresponding author states that there is no conflict of interest.

\noindent 	\textbf{Authors' contributions: } All authors have contributed equally. 


\noindent 	\textbf{Availability of data and materials: } Not applicable. 

 \bibliographystyle{amsplain}
\bibliography{References} 
		
 \end{document}